\tikzstyle{vertex}=[circle, fill, draw, inner sep=1pt]
\colorlet{lightgray}{black!15}
\newtheorem{thm}{Theorem}
\newtheorem*{thm*}{Theorem}
\newtheorem{prop}[thm]{Proposition}
\newtheorem*{prop*}{Proposition}
\newtheorem{conj}[thm]{Conjecture}
\newtheorem{claim}[thm]{Claim}
\newtheorem{question}[thm]{Question}
\def\E{\mathbb{E}}
\def\E{\mathbb{E}}
\def\R{\mathbb{R}}
\def\eps{\epsilon}
\def\1{\mathbf{1}}
\def\C{\mathcal {C}}
\def\lam {\lambda}
\def\Lam{\Lambda}
\def\tce{t_c + \eps}
\def\tce2{t_c + \frac{\eps}{2}}
\theoremstyle{remark}
\newtheorem*{remark}{Remark}
\DeclareMathOperator{\supp}{supp}
\newcommand{\pfrac}[2]{\parens*{\frac{#1}{#2}}}
\newcommand{\set}[1]{\left\{ #1 \right\}}
\newcommand{\popt}{{p^*}}
\newcommand{\Lopt}{\Lambda^*}
\newcommandtwoopt{\Kdr}[2][d][r]{K_{{#2} \times {#1}}}
\newcommandtwoopt{\Hdr}[2][d][r]{H({[#2]}^{#1})}
\newcommand{\eqdef}{=\vcentcolon}
\def\iw{i_r}
\def\is{i_2}
\def\hmod{H^{\mathrm{mod}}}
\DeclarePairedDelimiter{\size}{\lvert}{\rvert}
\DeclarePairedDelimiter{\parens}{(}{)}
\newcommand{\defeq}{\overset{\rm{def}}{=}} 
\begin{document}
\title{On the number of independent sets in uniform, regular, linear hypergraphs}
\author{Emma Cohen\thanks{Center for Communications Research, Princeton.} \and Will Perkins\thanks{Department of Mathematics, Statistics, and Computer Science, University of Illinois at Chicago; supported in part by NSF grants DMS-1847451 and CCF-1934915.} \and Michail Sarantis\thanks{School of Mathematics, Georgia Institute of Technology.} \and Prasad Tetali\thanks{School of Mathematics and School of Computer Science, Georgia Institute of Technology; supported in part by NSF grants DMS-1407657 and DMS-1811935.}}

\maketitle

\begin{abstract}
  We study the problems of bounding the number weak and strong independent sets in $r$-uniform, $d$-regular, $n$-vertex  linear hypergraphs with no  cross-edges.  In the case of weak independent sets, we provide an upper bound that is tight up to the first order term for all (fixed) $r\ge 3$, with  $d$ and $n$ going to infinity. In the case of strong independent sets, for $r=3$,  we provide an upper bound that is tight up to the second order term, improving on a result of Ordentlich-Roth (2004). The tightness in the strong independent set case is established by an explicit construction of a $3$-uniform, $d$-regular, cross-edge free, linear hypergraph on $n$ vertices which could be of interest in other contexts. We leave open the general case(s) with some conjectures. Our proofs use the occupancy method introduced by Davies, Jenssen, Perkins, and Roberts (2017). 
  
\end{abstract}

\section{Introduction}

A classic result in the extremal theory of bounded-degree graphs is the result of Jeff Kahn~\cite{Kahn2001} that a disjoint union of copies of the complete $d$-regular bipartite graph ($K_{d,d}$) maximizes the number of independent sets over all $d$-regular bipartite graphs on the same number of vertices. The result was later extended to all $d$-regular graphs by Yufei Zhao~\cite{zhao2010number}.   

\begin{thm}[Kahn, Zhao]
\label{thmKahn}
Let $i(G)$ denote the total number of independent sets of a graph $G$.  For all $d$-regular graphs $G$,
\begin{align}
\frac{\log i(G)}{|V(G)|} &\le \frac{\log i(K_{d,d})}{2d}\,. 
\end{align}
\end{thm}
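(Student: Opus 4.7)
My plan is to establish the theorem via the occupancy method, which handles the bipartite case of Kahn and the general case of Zhao uniformly without a bipartite double-cover trick. Consider the hard-core distribution on independent sets of $G$ at fugacity $\lambda > 0$, given by $\Pr[I] = \lambda^{|I|}/Z_G(\lambda)$ where $Z_G(\lambda) = \sum_I \lambda^{|I|}$ is the independence polynomial, and let $\alpha_G(\lambda) = \frac{\lambda Z_G'(\lambda)}{|V(G)|\, Z_G(\lambda)}$ be the expected density of a hard-core sample. Since $\frac{d}{d\lambda}\log Z_G(\lambda) = |V(G)|\,\alpha_G(\lambda)/\lambda$ and $Z_G(0) = 1$, we have
\[
\frac{\log i(G)}{|V(G)|} = \frac{\log Z_G(1)}{|V(G)|} = \int_0^1 \frac{\alpha_G(\lambda)}{\lambda}\,d\lambda,
\]
so the theorem reduces to proving the pointwise occupancy bound $\alpha_G(\lambda) \le \alpha_{K_{d,d}}(\lambda)$ for all $\lambda > 0$.

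To bound $\alpha_G(\lambda)$ I would pick a uniformly random vertex $v \in V(G)$ and compute $\Pr[v \in I]$ by first conditioning on the \emph{external} configuration $I \cap (V(G) \setminus (\{v\} \cup N(v)))$. Given this information, the restriction of $I$ to $\{v\} \cup N(v)$ is a hard-core configuration on the star with vertex-dependent activities: each leaf $u \in N(v)$ keeps activity $\lambda$ if none of its external neighbors lies in $I$, and collapses to activity $0$ otherwise. The conditional probability that $v \in I$ is then an explicit rational function of the binary blocking pattern $(b_{u_1}, \dots, b_{u_d}) \in \{0,1\}^d$. The analogous computation for $K_{d,d}$ collapses because all $d$ leaves share the same external neighborhood (namely $v$'s twin and its neighbors), so the blocking variables are perfectly correlated. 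The main obstacle is to show that this perfect correlation really is extremal after averaging over $v$ and over the external randomness. I expect to handle this by proving an appropriate concavity / Schur-majorization property for the local occupancy function, so that replacing two leaves' independent blocking marginals by a single common coupling only increases the expected occupancy; an averaging argument exploiting $d$-regularity (each edge counted symmetrically from both endpoints) then upgrades this local bound to the graph-wide inequality $\alpha_G(\lambda) \le \alpha_{K_{d,d}}(\lambda)$.

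Once the pointwise occupancy bound is in hand, integrating from $0$ to $1$ yields
\[
\frac{\log i(G)}{|V(G)|} \le \int_0^1 \frac{\alpha_{K_{d,d}}(\lambda)}{\lambda}\,d\lambda = \frac{\log i(K_{d,d})}{2d},
\]
completing the proof. The occupancy framework also identifies the extremal graphs as disjoint unions of $K_{d,d}$ through equality conditions in the local inequality, and serves as the natural template for the uniform, regular, linear hypergraph analogues pursued in the rest of this paper.
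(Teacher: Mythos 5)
Your high-level route---reducing to the pointwise occupancy inequality $\alpha_G(\lambda)\le\alpha_{K_{d,d}}(\lambda)$ and then integrating over $\lambda\in(0,1)$---is precisely the route the paper itself points to: Theorem~\ref{thmIndOcc} (Davies, Jenssen, Perkins, Roberts) strengthens Theorem~\ref{thmKahn} and yields it upon integration, as in~\eqref{eqOccInt}. The integration step you write out is correct.

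The gap is that the entire content of the theorem lives in the pointwise occupancy inequality, and your plan for proving it does not close. You correctly observe that conditioning on $I$ outside the closed neighborhood of $v$ reduces the local picture to a star whose leaves are either free or blocked, and that in $K_{d,d}$ the blocking indicators across the $d$ leaves are perfectly correlated. But an unspecified ``concavity / Schur-majorization property'' cannot by itself deliver the bound. Writing $k$ for the number of unblocked leaves, the conditional occupancy of $v$ is $\lambda/(\lambda+(1+\lambda)^k)$, which is \emph{decreasing} in $k$ (and convex for $\lambda\le 1$). An unconstrained choice of the distribution of $k$ would therefore concentrate all mass at $k=0$ and give a trivially large bound; the perfectly-correlated two-point distribution supported on $\{0,d\}$ that $K_{d,d}$ realizes is certainly not the unconstrained maximizer, so ``coupling the leaves'' is not obviously extremal. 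What pins down the distribution is the double-counting identity~\eqref{eqnTwoFormulations}: $\overline\alpha$ equals both $\tfrac{1}{n}\sum_v\Pr(v\in I)$ and $\tfrac{1}{nd}\sum_v\E\left|I\cap N(v)\right|$, and equating the two imposes a nontrivial linear constraint on the distribution of the local configuration. The Davies--Jenssen--Perkins--Roberts proof then maximizes the objective over all distributions satisfying this constraint and certifies, via an explicit dual-feasible point for the resulting LP, that the optimum is attained by the two-point distribution of $K_{d,d}$. Your ``averaging argument exploiting $d$-regularity'' alludes to this identity but does not extract the constraint from it, so the argument as written has a free parameter that could make the bound fail. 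A secondary caveat: in a general $d$-regular graph, $N(v)$ may induce edges, so after blocking the local structure need not be a star; one must also argue that tracking only the number of free, externally-uncovered neighbors suffices (or that induced edges only help), rather than assume a star configuration.
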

The logarithmic formulation of the theorem is equivalent to that in the preceding paragraph since $i(G)$ is multiplicative over unions of disjoint graphs. 

This result has led to many extensions and generalizations; Galvin and Tetali~\cite{galvin2004weighted} extended Kahn's result from counting independent sets to counting graph homomorphisms (and weighted independent sets).  For a recent survey of extremal results for regular graphs see~\cite{yufeiSurvey}.

The broad question we aim to address here is what are the possible generalizations of Theorem~\ref{thmKahn} to hypergraphs?

A hypergraph $G = (V,E)$ is a set of vertices $V$ and a collection of edges $E$ with each edge a subset of $V$.  A hypergraph is  \emph{$r$-uniform} if each edge contains exactly $r$ vertices.  The degree of a vertex $v \in V$ is $d(v) = |\{ e \in E : v \in e \}|$.   We say $u\sim v$ ($u$ is a \emph{neighbor} of $v$) if $u\neq v$ and there is some edge $e\in E(H)$ with $\{u,v\}\subseteq e$.   The \emph{neighborhood} of $v$ is $N(v) = \{u\in V(H)\mid u\sim v\}$.

A hypergraph is \emph{linear} if each pair of distinct vertices appear in at most one common edge.    A \emph{cross edge} in the neighborhood of a vertex $v$ is an edge $e$ that contains two neighbors $u_1, u_2$ of $v$ but not $v$ itself.  A hypergraph is \emph{cross-edge free} is it contains no cross-edges.  In an ordinary graph (a $2$-uniform hypergraph) being cross-edge free is being triangle free.  

A \emph{$k$-independent set} in a hypergraph is a a set $I \subseteq V(G)$ so that $|I \cap e| < k$ for all $e \in E(G)$.  Let $\mathcal I_k(H)$ be the set of all $k$-independent sets of a hypergraph $H$, and $i_k(G) = |\mathcal I_k(H)|$.  We refer to a $2$-independent set as a \emph{strong} independent set, and an $r$-independent set in an $r$-uniform hypergraph as a \emph{weak} independent set, and for simplicity we focus mainly on these two cases.

The main question we consider here is the generalization of Theorem~\ref{thmKahn} to linear hypergraphs.
\begin{question}
\label{qMainQ}
Which $d$-regular, $r$-uniform, linear hypergraph on a given number of vertices has the most $k$-independent sets?
\end{question}

\subsection{Strong independent sets}

Apart from the trivial cases $r=1$ or $d=1$, a tight answer to Question~\ref{qMainQ} is known in only two cases: Theorem~\ref{thmKahn} gives the answer  for $r=2$ (the case of ordinary graphs for which strong and weak independent sets coincide): for every $2$-uniform, $d$-regular hypergraph $G$ on $n$ vertices,
\begin{align}
\frac{\log \is(G)}{n} &\leq \frac{\log \is(K_{d,d})}{2d} = \frac{\log(2^{d+1}-1)}{2d} = \frac {1}{2}  +\frac{1}{2d} - \exp( - \Theta(d))\,.
\end{align}
Here and in what follows we write $\log x $ for $\log_2 x$ and $\ln x$ for the natural logarithm.  We also use standard asymptotic notation $O(\cdot), \Omega(\cdot), \Theta(\cdot)$, as $r, d \to \infty$.  A function $f(r,d) = O(g(r,d))$ if there exists a constant $C$ so that $f(r,d) \le C g(r,d)$ for all $r,d$.

The following result~\cite{davies2015} answers the question for $d=2$.  As originally phrased, the theorem states that a union of copies of $K_{r,r}$ maximizes the number of matchings of any $r$-regular graph on the same number of vertices.  However, for any graph $G$ we can define a $2$-regular, linear hypergraph $G^T$ with a vertex in $G^T$ for every edge of $G$ and an edge for every vertex of $G$, comprising of all of its incident edges (we choose this notation since the transformation transposes the edge-vertex incidence matrix).  Then $G\mapsto G^T$ is a bijection between $r$-regular ($2$-uniform, simple) graphs on $n$ vertices and $2$-regular, $r$-uniform, linear hypergraphs on $rn/2$ vertices, and matchings in $G$ correspond naturally to strong independent sets in $G^T$. Thus one of the results of \cite{davies2015} can be equivalently phrased as:

\begin{thm}[Davies, Jenssen, Perkins, Roberts]
	\label{thmMatch}
	For any $2$-regular, $r$-uniform hypergraph $G$,
	\begin{align}
	\frac{\log \is(G)}{|V(G)|} &\le \frac{\log \is(K_{r,r}^T)}{r^2} =  \Theta\left(\frac{\log r}{r} \right) \,. 
	\end{align}
\end{thm}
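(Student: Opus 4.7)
The approach would be the occupancy method of Davies--Jenssen--Perkins--Roberts applied to the monomer--dimer model. Via the bijection $G \leftrightarrow G^T$ described just before the theorem, strong independent sets of a $2$-regular, $r$-uniform, linear hypergraph are in bijection with matchings of an $r$-regular graph, so it suffices to show that for every $r$-regular graph $G$ on $n$ vertices, the number of matchings $m(G)$ satisfies $\log m(G)/n \le \log m(K_{r,r})/(2r)$, which is the original formulation in~\cite{davies2015}.

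Introduce the monomer--dimer partition function $Z_G(\lam) = \sum_{M} \lam^{|M|}$, summed over matchings $M$ of $G$, and the associated Gibbs measure $\mu_\lam(M) \propto \lam^{|M|}$. Since $Z_G(0) = 1$ and $Z_G(1) = m(G)$, integrating the logarithmic derivative gives
\[
\log m(G) \;=\; \int_0^1 \frac{\E_{\mu_\lam}[|M|]}{\lam}\, d\lam \;=\; \frac{n}{2}\int_0^1 \frac{\alpha_G(\lam)}{\lam}\, d\lam,
\]
where $\alpha_G(\lam) = 2\E_{\mu_\lam}[|M|]/n$ is the expected fraction of vertices covered by a random matching (the \emph{occupancy fraction}). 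Consequently it would suffice to establish the pointwise inequality $\alpha_G(\lam) \le \alpha_{K_{r,r}}(\lam)$ for every $\lam \ge 0$ and every $r$-regular $G$, and then integrate.

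For the pointwise bound I would average over a uniformly random vertex $v$ and sample $M \sim \mu_\lam$. A cavity-style identity gives
\[
\Pr[v \text{ matched}] \;=\; \frac{\lam \sum_{u \sim v} q_u^{v}}{1 + \lam \sum_{u \sim v} q_u^{v}},
\]
where $q_u^{v}$ is the marginal probability that $u$ is unmatched in the cavity graph $G - v$. This expresses $\alpha_G(\lam)/2$ as the expectation of a local functional of the $r$-tuple of cavity marginals at the random vertex. The heart of the proof is to show that, over all configurations of $(q_u^v)$ realizable by some $r$-regular host graph, this local functional is maximized by the symmetric configuration attained by $K_{r,r}$ (where all $r$ cavity probabilities coincide with the $K_{r-1,r}$ value). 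I would cast this as a linear program over distributions on local star neighborhoods of random edges, with the regularity constraint encoded as a linear equality on marginals (each edge counted from both endpoints), and use concavity of the logarithm of $1 + \lam \sum q_u^v$ to push the extremum to the balanced, symmetric point.

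Finally, a routine evaluation $m(K_{r,r}) = \sum_{k=0}^{r} k!\binom{r}{k}^{2} = r!\,(1+o(1))$ yields $\log m(K_{r,r})/r^{2} = \Theta(\log r / r)$. The main obstacle I expect is the local step: because the monomer--dimer marginals do not factorize, the cavity quantity $q_u^v$ is itself determined by a global recursion, so one must either work with distributions on infinite tree-like local neighborhoods (as in DJPR) or reduce to a finite-dimensional program by truncating the recursion and absorbing higher-order contributions into $\lam$. Verifying that the symmetric $K_{r,r}$ configuration, and not some unbalanced bipartite alternative, realizes the true extremum is the delicate point that the occupancy-method machinery is designed to address.
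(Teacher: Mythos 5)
This theorem is cited from Davies--Jenssen--Perkins--Roberts~\cite{davies2015}; the paper does not reprove it, and merely observes (immediately before the statement) that the $G\mapsto G^T$ transpose bijection converts matchings in $r$-regular graphs to strong independent sets in $2$-regular, $r$-uniform linear hypergraphs. Your reduction to matchings in $r$-regular graphs and your use of the monomer--dimer occupancy fraction, integrated over $\lambda\in[0,1]$, is exactly the DJPR strategy, and the identity $\log m(G)=\int_0^1 \E_{\mu_\lambda}|M|/\lambda\,d\lambda$ is right.

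However, the local step you sketch is not the one DJPR actually use, and I think the version you propose would fail for the reason you yourself flag. You write the occupancy fraction in terms of cavity marginals $q_u^v$ on $G-v$ and then want to optimize a local functional over realizable tuples $(q_u^v)_{u\sim v}$. But the constraint set for realizable cavity-marginal tuples is not finite-dimensional, and there is no obvious linear (or even convex) description of it, precisely because $q_u^v$ is a global quantity. DJPR sidestep this entirely: they pick a uniformly random edge $e=uv$, condition on the local configuration (which of the $\approx 2(r-1)$ edges leaving $\hat N(e)$ are externally matched), and note that given this configuration the conditional law of the matching restricted to $\hat N(e)$ is an explicit, finite monomer--dimer distribution on a small graph. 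They then write the occupancy fraction two ways (once counting matched vertices, once counting matched edges), obtain a linear constraint on the distribution over local configurations, and solve the resulting finite-dimensional LP --- this is what the present paper describes abstractly in Section~2 and instantiates in~\eqref{eqnTwoFormulations}. Your "concavity pushes the extremum to the symmetric point" step is the crux and is left unjustified; in DJPR it is replaced by exhibiting an explicit dual-feasible solution to the LP, which requires real work. Finally, a small factual slip: $m(K_{r,r})=\sum_{k=0}^r k!\binom{r}{k}^2$ is \emph{not} $r!(1+o(1))$; the dominant terms are at $k\approx r-\sqrt r$ and the sum is larger than $r!$ by a factor of roughly $e^{2\sqrt r}$. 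This is harmless for the $\Theta(\log r/r)$ conclusion since $\log m(K_{r,r})= r\log r\,(1+o(1))$ either way, but the stated asymptotic is wrong.
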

In other words, for strong independent sets in 2-regular hypergraphs the maximizing hypergraph is $K_{r,r}^T$, the $r\times r$ grid.

Prior to Kahn's work, when the case of ordinary graphs was still unsettled, Ordentlich and and Roth~\cite{Ordentlich2004} gave a general bound for the number of strong independent sets ($k=2$) in regular, uniform, linear hypergraphs.

\begin{thm}[Ordentlich and and Roth]
	\label{thmOR}
	For every $r$-uniform, $d$-regular, linear hypergraph $G$ on $n$ vertices,
	\[\frac{\log \is(G)}{n} \leq \frac{1}{r} + O\left(\frac{\log^2(rd)}{rd}\right).\]
\end{thm}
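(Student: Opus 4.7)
The plan is to combine Shearer's inequality with the occupancy method, in the spirit of Davies--Jenssen--Perkins--Roberts. Let $I$ be uniform on $\I_2(G)$ and set $X_v = \1[v \in I]$, so $\log \is(G) = H(X_V)$. Applying Shearer's inequality with the hyperedge cover (each vertex lies in exactly $d$ hyperedges) gives
\[
  H(X_V) \le \frac{1}{d}\sum_{e \in E(G)} H(X_e).
\]
For each hyperedge $e$, the constraint $\size{I \cap e} \le 1$ means $X_e$ takes at most $r+1$ values; writing $Z_e = \1[I \cap e \ne \emptyset]$ and $q_e = \E Z_e$, the decomposition $H(X_e) = h(q_e) + q_e\,H(X_e \mid Z_e = 1) \le h(q_e) + q_e \log r$ combined with $\sum_e q_e = d\,\E\size{I}$ and the deterministic bound $\size{I} \le n/r$ yields, via concavity,
\[
  \frac{\log \is(G)}{n} \le \frac{1}{r}\bigl[h(\bar q) + \bar q \log r\bigr] \le \frac{\log(r+1)}{r},
\]
where $\bar q = r\E\size{I}/n \le 1$. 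This suboptimal $(\log r)/r$ bound leaves a gap of order $\log r$ to the target.

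To close the gap I would pass to the hard-core measure at fugacity $\lambda$: $\mu_\lambda(I) \propto \lambda^{\size{I}}$ on $\I_2(G)$, with occupancy fraction $\alpha_G(\lambda) = \E_{\mu_\lambda}\size{I}/n$, and use the standard identity $\log \is(G) = n \int_0^1 \alpha_G(t)/t\,dt$. The hyperedge-counting bound gives $\alpha_G(t) \le 1/r$ for all $t$, so splitting the integral at a threshold $\lambda^* \in (0,1)$ and using this bound on $[\lambda^*, 1]$ contributes $(1/r)\log(1/\lambda^*)$. On $[0, \lambda^*]$ one needs a refined \emph{local} bound on $\alpha_G(t)$, obtained by conditioning on the configuration outside the closed neighborhood of a uniformly random vertex $v$ and exploiting the sunflower structure of the $d$ hyperedges through $v$, which by linearity share only $v$ itself.

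The main obstacle is quantifying the local bound precisely enough to produce the error $O(\log^2(rd)/(rd))$. A naive bound such as $\alpha_G(t) \le t$ contributes $\lambda^*$ to the small-$\lambda$ integral and, after optimization over $\lambda^*$, yields only $\Theta(\log r / r)$. The right bound must capture the exponential-in-$d$ suppression from the $d$ competing petal constraints at $v$: conditional on an external configuration leaving the petals otherwise free, only the joint pattern in which all petals contain no IS vertex allows $v \in I$, so that $\P[v \in I \mid \text{external}]$ is multiplicatively smaller by a factor of order $(1+(r-1)t)^{-d}$. Turning this heuristic into a formal inequality requires either a monotone coupling between $\mu_\lambda$ on $G$ and a reference measure on the sunflower neighborhood, or a direct cluster-expansion or LP argument on the local occupancies; the threshold $\lambda^*$ is then chosen to balance the two halves of the integral. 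An alternative that sidesteps the hard-core intermediary is Ordentlich and Roth's original enumeration technique, which leverages the $K_r$-clique decomposition of the $2$-shadow graph of $G$.
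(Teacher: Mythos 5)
This theorem is quoted in the paper from \cite{Ordentlich2004} and not re-proved there, so there is no in-paper argument to compare against; the referenced proof is a direct combinatorial/enumeration argument over cliques of the $2$-shadow graph, not an occupancy-fraction argument. Your Shearer warm-up is correct as far as it goes (each vertex lies in exactly $d$ hyperedges, $H(X_e)\le h(q_e)+q_e\log r$, and $\bar q\le 1$ via $|I|\le n/r$), but as you note it only delivers $\log(r+1)/r$, which misses the target by a multiplicative $\Theta(\log r)$ factor rather than an additive $O(\log^2(rd)/(rd))$.

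The genuine gap is in the second half. You correctly observe that splitting $\int_0^1\alpha_G(t)/t\,dt$ with only $\alpha_G(t)\le t$ on $[0,\lambda^*]$ and $\alpha_G(t)\le 1/r$ on $[\lambda^*,1]$ optimizes to $\Theta(\log r/r)$, so a sharper local bound is essential. But that local bound is never established; the proposal stops at a heuristic (``the external conditioning suppresses $\Pr[v\in I]$ by roughly $(1+(r-1)t)^{-d}$'') and explicitly defers the formalization to an unstated coupling, cluster-expansion, or LP argument. That step is exactly where all the work lies: even conditionally free petals give $\Pr[v\in I\mid\cdot]=\lambda/(\lambda+(1+(r-1)\lambda)^d)$ only after discarding interactions among petal vertices (which requires either linearity \emph{and} cross-edge-freeness or a careful handling of cross-edges), and turning pointwise conditional bounds into a bound on $\E_{\mu_\lambda}|I|/n$ requires a global constraint linking the distribution of local configurations to the occupancy fraction --- precisely the LP machinery the paper develops over several pages for the much more restricted cross-edge-free, $r=3$ setting of Theorem~\ref{thm:3unif}. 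Without a proved local lemma of the right quantitative strength, the threshold optimization cannot be carried out and the claimed $O(\log^2(rd)/(rd))$ error term does not follow. In short, this is a plausible program (and one that, if completed, would likely improve the $\log^2$ to $\log$ as in Conjecture~\ref{conj:strong}), but it is not a proof of Theorem~\ref{thmOR}.
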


Their interest in the problem was motivated by understanding the number of independent sets in the Hamming graph $H(n,q)$ with vertex set $\{0,1,\ldots, q-1\}^n$ and edges between vectors at Hamming distance $1$. (They were particularly interested in $q>2$, since much more precise information  was  already known about the $q=2$ case~\cite{korshunov1983number,sapozhenko1989number}). As observed in \cite{Ordentlich2004}, a subset of the Hamming graph $H(n,q)$ is an independent set if and only if it is also a (strong) independent set in the $q$-uniform, $n$-regular linear hypergraph with the same vertex set as $H(n,q)$ and with hyperedges being the subsets of vertices that agree in all but one coordinate. Thus Theorem~\ref{thmOR} gives corresponding bounds on the number of independent sets in the Hamming graph for all $q\ge 2$.

We conjecture that the second-order term in Theorem~\ref{thmOR} can be improved.
\begin{conj}
	\label{conj:strong}
	Let $H$ be an $r$-uniform, $d$-regular, linear hypergraph on $n$ vertices.  Then
	\begin{align}
	\frac{\log \is(G)}{n} &\le  \frac{1}{r} + O\left(\frac{\log r}{ rd}\right).
	\end{align}
\end{conj}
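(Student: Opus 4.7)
I would attempt this via the occupancy method, matching the approach used elsewhere in this paper. Set
$Z(\lambda) = \sum_{I\in\mathcal I_2(H)}\lambda^{|I|}$
for the hard-core partition function on strong independent sets, $\mu_\lambda$ for the associated Gibbs measure, and $p_v(\lambda) = \mathbb P_{\mu_\lambda}(v\in I)$ for the vertex occupancy. The identity
\[
\log i_2(H) \;=\; \log Z(1) \;=\; \int_0^1 \frac{1}{\lambda} \sum_{v\in V(H)} p_v(\lambda)\,d\lambda
\]
reduces the problem to pointwise control of $p_v(\lambda)$. Two general bounds are elementary: the trivial bound $p_v(\lambda)\le\lambda$ (since the partition function of $H$ restricted to $V\setminus N[v]$ is at most $Z(\lambda)$), and the packing bound $\sum_{v} p_v(\lambda)=\mathbb E_{\mu_\lambda}|I|\le n/r$ (since every strong independent set in an $r$-uniform $d$-regular linear hypergraph has $|I|\le n/r$). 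Combining them by splitting the integral at a threshold $\lambda^{\ast}\asymp 1/r$ only produces a trivial $\log r/r$ upper bound, so the key task is a much sharper local estimate on $p_v(\lambda)$ for moderate $\lambda$.

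The target local bound is of windmill type,
\[
p_v(\lambda) \;\le\; \frac{\lambda}{\lambda + (1+(r-1)\lambda)^{d}},
\]
which is the exact occupancy at the hub of a cross-edge-free windmill of $d$ copies of $K_r$ glued at $v$. If established, then $\int_0^{\lambda^{\ast}} p_v(\lambda)/\lambda\,d\lambda = O(1/(rd))$ uniformly in $\lambda^{\ast}$; combining with the packing bound on $[\lambda^{\ast},1]$ and tuning $\lambda^{\ast}$ to a constant just below $1$ would yield the conjectured $1/r + O(\log r/(rd))$. To prove such a bound I would fix $v$, condition on the configuration of $I$ outside the closed neighbourhood $N[v]$, and reduce to a hard-core model on $N[v]$ with boundary condition. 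Linearity partitions $N(v)$ into $d$ disjoint petals of size $r-1$, one per hyperedge through $v$, and the desired estimate would then follow from an FKG-type monotonicity: adding cross-edges inside $N(v)$ or imposing a boundary condition should only decrease $p_v$.

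The main obstacle is that this monotonicity is \emph{false}. Already for $K_{2,2}$ (a $2$-uniform, $2$-regular, linear hypergraph with $r=d=2$) one computes $p_v(1) = 2/7$, strictly larger than the windmill value $1/5$, because the antipode of $v$ can sit in an independent set together with $v$. So the na\"ive windmill bound must be replaced by a more delicate estimate. One promising route is the \emph{multivariate} occupancy method of Davies--Jenssen--Perkins--Roberts: assign distinct fugacities to the $d$ hyperedges through $v$ and carry out a tree-recursion comparison (the sharpness trick that underlies the matching bound in Theorem~\ref{thmMatch}). A second route is to sharpen the packing bound near $\lambda = 1$ to $\mathbb E_{\mu_\lambda}|I| \le n/r - \Omega(n\log r /(rd))$ via a log-Sobolev or entropy-method concentration estimate on the Gibbs measure, which would recover the correct second-order term without needing a pointwise local bound. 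Either route demands substantial new ideas beyond Ordentlich--Roth's argument, and identifying a matching construction (plausibly an affine plane or a Steiner system of suitable parameters) would also be needed to confirm that the $\log r/(rd)$ error rate is sharp; this is likely why the statement remains a conjecture.
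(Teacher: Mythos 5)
The statement you were asked to prove is Conjecture~\ref{conj:strong}, which the paper explicitly leaves open; the authors only prove the special case $r=3$ with the additional cross-edge-free assumption (Theorem~\ref{thm:3unif}), via an occupancy-fraction linear-program relaxation, and present the general statement as a conjecture supported by constructions. There is therefore no proof in the paper to compare against, and a correct response to this exercise must recognize that fact rather than supply a proof.

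To your credit, you do recognize it: your write-up is an honest reconnaissance rather than a claimed proof, and its technical content is sound. The packing bound $\mathbb{E}_{\mu_\lambda}|I|\le n/r$ is correct (double-count vertex--edge incidences through $I$ against the $nd/r$ edges). The identity $\ln Z(1)=\int_0^1\lambda^{-1}\sum_v p_v(\lambda)\,d\lambda$ is the same starting point the paper uses in~\eqref{eqOccInt}, modulo the $\log_2$ vs.\ $\ln$ normalization which you should keep track of. Most valuably, your observation that the na\"ive windmill bound $p_v(\lambda)\le\lambda/(\lambda+(1+(r-1)\lambda)^d)$ is \emph{false} --- with the clean counterexample $K_{2,2}$ giving $p_v(1)=2/7>1/5$ --- correctly identifies exactly why a pointwise local bound cannot work and why the problem is genuinely hard; this is in the same spirit as the authors' remark that their LP relaxation is likely not tight for $d,r>2$. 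Your two proposed escape routes (a multivariate/tree-recursion occupancy argument, or an entropy-method sharpening of the packing bound near $\lambda=1$) are plausible directions but are not carried out, and you correctly flag that substantial new ideas would be required. One small inaccuracy: your closing parenthetical suggests a matching construction is still to be found, but Section~\ref{secTightExamples} already exhibits $r$-partite modular constructions (e.g.\ $H(r,d)$ for prime $d$) attaining $\frac{1}{r}+\frac{\log r}{rd}-O(2^{-d})$, so the tightness side of the conjecture is essentially settled; the open direction is entirely the upper bound. In short: not a proof, and you know it --- but the obstructions you identify are real and correctly diagnosed.
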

Our first main result is to confirm this for the cases $r=3$ with the additional assumption of cross-edge freeness. 
\begin{thm}
	\label{thm:3unif}
	Let $G$ be a $3$-uniform, $d$-regular, linear hypergraph on $n$ vertices without cross-edges.  Then 
	\begin{align}
	\frac{\log \is(G)}{n} &\le \frac{1}{3} + O\left(\frac{1}{d}\right).
	\end{align}
\end{thm}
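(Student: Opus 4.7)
The plan is to apply the occupancy method of Davies, Jenssen, Perkins, and Roberts to the hard-core model on strong independent sets of $G$. For each $\lambda > 0$, let $\mu_\lambda$ be the probability measure on $\mathcal I_2(G)$ with $\mu_\lambda(I) \propto \lambda^{|I|}$, let $Z_G(\lambda) = \sum_{I \in \mathcal I_2(G)} \lambda^{|I|}$, and let $\alpha_G(\lambda) = \frac{1}{n}\E_{\mu_\lambda}[|I|]$ be the occupancy fraction. The standard identity $\log i_2(G) = \log Z_G(1) = n\int_0^1 \alpha_G(t)/t\, dt$ reduces the theorem to proving a uniform upper bound $\alpha_G(\lambda) \le \bar\alpha(\lambda, d)$ with $\int_0^1 \bar\alpha(t,d)/t\, dt \le \frac{1}{3} + O(1/d)$.

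At the heart is a local analysis. By linearity, the $d$ hyperedges through any vertex $v$ take the form $e_i = \{v, u_{i,1}, u_{i,2}\}$ and share only $v$, so $|N[v]| = 2d+1$; by cross-edge freeness, these are the only edges of $G$ contained in $N[v]$. Conditioning on the restriction of $I$ to $V \setminus N[v]$, each $u_{i,j}$ is assigned an effective activity $c_{i,j} \in \{0, \lambda\}$, equal to $\lambda$ iff no external neighbor of $u_{i,j}$ (through one of its $d-1$ other edges) lies in $I$. The local partition function then factors as $Z_v(c) = \lambda + \prod_{i=1}^d(1 + c_{i,1} + c_{i,2})$, giving closed forms for $\P[v \in I \mid \text{ext}]$ and $\sum_{u \in N(v)} \P[u \in I \mid \text{ext}]$.

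Since each vertex lies in exactly $2d+1$ closed neighborhoods, summing over $v$ yields
$$(2d+1)\,\alpha_G(\lambda) \;=\; \E_v\, \E\!\left[\frac{\lambda + \sum_i a_i\prod_{i' \ne i}(1+a_{i'})}{\lambda + \prod_i (1+a_i)}\right],$$
where $a_i = c_{i,1}+c_{i,2} \in \{0,\lambda,2\lambda\}$. Rewriting the integrand as $S + \lambda(1-S)/(\lambda+P)$ with $P = \prod_i(1+a_i)$ and $S = \sum_i a_i/(1+a_i)$ exposes a trade-off: either $S$ is far from its maximum $2d\lambda/(1+2\lambda)$, in which case the correction is controlled by $\lambda/(\lambda+1)$, or $S$ is near its maximum, forcing $P$ to be exponentially large in $d$ so that the correction is negligible. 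An equivalent edge-based formulation uses $\alpha_G(\lambda) = \frac{1}{nd}\sum_e \P[|I\cap e|=1]$ together with Jensen's inequality on each edge marginal.

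The main obstacle is making this trade-off quantitatively tight. The leading $\frac{1}{3}$ is forced by $|I| \le n/3$ (from $d|I| = \sum_e|I\cap e| \le |E| = nd/3$), but a naive maximization of the local integrand over all $(c_{i,j}) \in \{0,\lambda\}^{2d}$ gives only $\int_0^1 \bar\alpha(t,d)/t\, dt \approx \frac{\log 3}{2}$, which is $\frac{1}{3} + \Theta(1)$ rather than $\frac{1}{3} + O(1/d)$. To capture the correct second-order term, one must exploit the fact that many of the $c_{i,j}$ cannot simultaneously equal $\lambda$ in a $d$-regular host: this can be done either via a self-consistency inequality bounding $\alpha_G(\lambda)$ in terms of itself through the marginals $\P[X_{i,j}=1]$, or by identifying the precise extremal configuration of $(c_{i,j})$ (informed by the extremal hypergraph constructed elsewhere in the paper) and showing that configurations with $S$ near maximum carry small enough mass under $\mu_\lambda$.
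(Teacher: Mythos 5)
Your setup is correct and matches the paper's: you reduce to bounding the occupancy fraction $\alpha_G(\lambda)$ via the integral identity, condition on the restriction of $I$ outside the closed neighborhood $N[v]$, and observe that cross-edge freeness makes the local partition function factor over the $d$ pairs. But you have correctly diagnosed, and not repaired, the fatal flaw in the version you wrote down: pointwise maximizing the local integrand over all $2^{2d}$ external-coverage patterns $(c_{i,j})$ yields $\frac{\log 3}{2} = \frac{1}{3} + \Theta(1)$, which is exactly the trivial bound from $|I| \le n/3$ plus a constant, not $\frac{1}{3} + O(1/d)$. Your final paragraph gestures at the fix ("a self-consistency inequality", "identifying the precise extremal configuration") but does not supply one, and without it the argument does not prove the theorem.

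The paper resolves this via a linear-programming relaxation with nontrivial equality constraints on the distribution of local configurations, and this is the genuinely missing idea. Concretely, let $t(e) = |e \cap A|$ count the available (uncovered, unoccupied) vertices in a uniformly random edge $e \ni v$; then for each $t$ one has the double-counting identity $\tfrac{t}{r}\Pr[t(e)=t] = \Pr[v \in A,\, t(e)=t]$, and conditioning each side on the local configuration $C_v$ (parameterized, in the cross-edge-free case, by how many of the $d$ edges through $v$ retain $0$, $1$, or $2$ uncovered endpoints) yields linear constraints on the configuration probabilities $p(C)$. Maximizing $\sum_C p(C)\,\lambda/P_C(\lambda)$ subject to these constraints and $\sum_C p(C) = 1$ is an LP whose optimum is strictly smaller than the unconstrained pointwise maximum you computed; the paper shows (via a candidate primal supported on the "pure" configurations and a verified dual-feasible point) that the LP optimum is $\alpha_{\mathrm s}(3,d,\lambda)$, and this integrates to $\frac{1}{3} + O(1/d)$. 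Verifying dual feasibility, i.e.\ that the slack constraint $\sum_t \eta_t (Q_\eta - Q_t)(\Lambda^* + \Lambda^*_{t-1}/(1+(t-1)\lambda)) \ge 0$ holds for all admissible $\eta$, is the technical heart of the proof and requires several explicit polynomial inequalities specific to $r=3$; none of this is present in your proposal. Until you write down a concrete global constraint of this type and show it forces the second-order term down to $O(1/d)$, the proof is incomplete.
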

In Section~\ref{secTightExamples} we will show that the dependence on $d$ in the second-order term is best possible.

The proof of Theorem~\ref{thm:3unif} is in fact more general and gives an upper bound on the independence polynomial,
\begin{align}
Z_G(\lam) &= \sum_{I \in \mathcal I_2(G)} \lam^{|I|}  \, ,
\end{align}
for all values of $\lam >0$. The function $Z_G(\lam)$ is  known in statistical physics as the partition function of the hard-core model. We can recover $\is(G)$ by taking $\lam=1$.   Both Theorems~\ref{thmKahn} and~\ref{thmMatch} also hold at this level of generality; that is, the normalized log partition function is maximized by $K_{d,d}$ and $K_{r,r}^T$ respectively for all values of $\lam>0$.  We discuss the hard-core model and the method of proof in further detail in Section~\ref{secHardCore}.  While we believe the method of proof can be extended to additional small cases of $r$,  additional insight would be  required to push the technique to work for  $r>7$. For this reason, we restrict our attention to $r=3$ in the current presentation.

\subsection{Weak independent sets}
\label{subsecWeak}

Next we consider $r$-independent sets in $r$-uniform hypergraphs.  Recently Balabonov and Shabanov~\cite{balobanov2018number} used the  method of hypergraph containers (\cite{saxton2015hypergraph,balogh2015independent}) to give a general upper bound on $i_r$ for $r$-uniform, linear, regular hypergraphs.

\begin{thm}[Balabonov, Shabanov]
	\label{thm:shabanov}
	Suppose $G$ is a linear, $r$-uniform, $d$-regular hypergraph on $n$ vertices.  Then for $2 \le j \le r$, 
	\[ \frac{\log i_j(G) }{n} \le \frac{j-1}{r} +   O( (\log d)^{2 (j-1)/j}  \cdot d^{-1/j}) \, . \]
\end{thm}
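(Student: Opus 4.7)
The plan is to use the hypergraph container method, the standard tool for bounding independent sets in sparse hypergraphs with small codegrees. First I would recast the problem. Define the auxiliary $j$-uniform hypergraph $\mathcal{H}$ on vertex set $V(G)$ whose edges are the $j$-subsets of edges of $G$; then $I \subseteq V(G)$ is $j$-independent in $G$ if and only if $I$ is independent in $\mathcal{H}$. Linearity of $G$ ensures that $\mathcal{H}$ has no repeated edges, is regular of degree $D := d\binom{r-1}{j-1} = \Theta(d)$, and has $\ell$-codegrees bounded by the absolute constant $\binom{r-\ell}{j-\ell}$ for every $\ell \ge 2$. These $O(1)$ codegrees---a direct consequence of linearity---are the structural feature that makes the container method effective.

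Next I would apply the Saxton-Thomason container theorem to $\mathcal{H}$. With a parameter $\tau > 0$ to be chosen at the end, this produces a family $\mathcal{C}$ of subsets of $V(G)$ such that every independent set of $\mathcal{H}$ lies in some $C \in \mathcal{C}$, $\log|\mathcal{C}| = O(n\tau \log(1/\tau))$, and each container $C$ is ``almost independent'' in $\mathcal{H}$ in a sense quantified by $\tau$ and the codegree profile. To turn this last property into a bound on $|C|$, I would prove the following supersaturation statement by a one-line double count on $G$: any $A \subseteq V(G)$ with $|A| \ge \bigl(\tfrac{j-1}{r} + \eta\bigr) n$ contains at least $\Omega(\eta\, d\, n)$ edges of $\mathcal{H}$. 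Indeed, $\sum_{e \in E(G)} |A \cap e| = d|A|$ forces $\Omega(\eta dn/r)$ edges of $G$ to contain at least $j$ vertices of $A$, and each such edge yields a $j$-subset, hence an edge of $\mathcal{H}$, inside $A$. Combined with the container theorem this forces $|C| \le \bigl(\tfrac{j-1}{r} + \eta(\tau)\bigr) n$ for some $\eta(\tau) \to 0$ as $\tau \to 0$.

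Putting the two ingredients together,
\[
\log i_j(G) \;\le\; \log|\mathcal{C}| + \max_{C \in \mathcal{C}} |C| \;\le\; O(n\tau \log(1/\tau)) + \left(\frac{j-1}{r} + \eta(\tau)\right) n.
\]
The last step is to optimize $\tau$ against the container theorem's codegree hypothesis, which takes the form of a sum of codegree terms of $\mathcal{H}$ weighted by powers of $\tau$. Because the higher codegrees are $O(1)$, this hypothesis permits $\tau$ as small as a polylogarithmic factor times $d^{-1/j}$; balancing against the two error terms above yields the claimed bound $O\bigl((\log d)^{2(j-1)/j} d^{-1/j}\bigr)$.

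I expect the main obstacle to be this final optimization: tracking precisely how the bounded codegrees of $\mathcal{H}$ feed into the container theorem's hypothesis, and carrying the polylogarithmic factors correctly through the balancing to produce the exponents $-1/j$ on $d$ and $2(j-1)/j$ on $\log d$. The essential gain over what one might expect from a generic $j$-uniform hypergraph comes from the fact that linearity forces all $\ell$-codegrees with $\ell \ge 2$ to be $O(1)$ rather than growing with $d$; without this, the error term would be substantially larger.
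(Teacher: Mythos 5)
The paper does not prove Theorem~\ref{thm:shabanov}; it cites it as a result of Balobanov and Shabanov and notes that their proof uses the hypergraph container method. Your proposal follows exactly that route: recast $j$-independence in $G$ as independence in the auxiliary $j$-uniform hypergraph $\mathcal{H}$ of $j$-subsets of edges, use linearity to get $O(1)$ codegrees in $\mathcal{H}$, supply supersaturation via the double count $\sum_{e\in E(G)}|A\cap e|=d|A|$, apply containers, and balance $\tau$. The outline is sound (the codegree and supersaturation calculations check out), so this is essentially the same approach as the cited proof, with the final $\tau$-optimization left as an honest promissory note rather than carried out.
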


The case of weak independent sets, $j=r$, was implicit in~\cite{saxton2015hypergraph}.

Our second main result is an improved upper bound on the number of weak independent sets in cross-edge free hypergraphs.
\begin{thm}
	\label{thm:weak}
	Suppose $G$ is a linear, $r$-uniform, $d$-regular hypergraph on $n$ vertices with no cross-edges.  Then 
	\begin{align}
	\frac{\log \iw(G)}{n} &\le    \frac{r-1}{r} + O\left(d^{-1/(r-1)}\right).
	\end{align}
\end{thm}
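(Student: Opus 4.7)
The plan is to follow the occupancy method of Davies, Jenssen, Perkins, and Roberts, adapted to weak independent sets. Introduce the partition function $Z_G(\lambda) := \sum_{I\in\mathcal{I}_r(G)}\lambda^{|I|}$ and Gibbs measure $\mu_\lambda(I)\propto\lambda^{|I|}$; since $\iw(G) = Z_G(1)$, the integral identity
\[
\log Z_G(\lambda_0) = \int_0^{\lambda_0}\frac{n\bar\alpha(t)}{t}\,dt,\qquad \bar\alpha(t) := \frac{1}{n}\sum_v\P_{\mu_t}[v\in I],
\]
reduces the problem to upper-bounding the averaged occupancy $\bar\alpha(t)$ for $t\in(0,1]$.

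At each vertex $v$, write $N_i := e_i\setminus\{v\}$ for the $d$ edges $e_1,\ldots,e_d$ incident to $v$. Linearity together with cross-edge-freeness imply the $N_i$ are pairwise disjoint and $\bigcup_iN_i$ contains no edge of $G$. A direct Gibbs calculation yields the local identity
\[
\alpha_v(\lambda) := \P_{\mu_\lambda}[v\in I] = \frac{\lambda}{1+\lambda}\bigl(1-p_v(\lambda)\bigr),\qquad p_v(\lambda) := \P_{\mu_\lambda}[\exists\,i:\ N_i\subseteq I],
\]
so upper-bounding $\alpha_v$ reduces to lower-bounding $p_v$. Revealing the Gibbs configuration on $V\setminus N[v]$ first, the cross-edge-free hypothesis factorizes the conditional law of $I\cap N[v]$ across the $d$ branches $N_i$. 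Combining a single-branch FKG-type bound $\P[N_i\subseteq I]\ge\prod_{u\in N_i}\alpha_u$ with a Janson-type combination across branches then yields a local lower bound of the form $p_v(\lambda)\ge 1-\exp(-c_r\,d\,\bar\alpha(\lambda)^{r-1})$ for an explicit constant $c_r>0$.

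Substituting back into the integral identity produces a self-consistent inequality for $\bar\alpha$ which saturates near the trivial bound $\bar\alpha(t)\le t/(1+t)$ for $t\lesssim d^{-1/(r-1)}$ and becomes significantly smaller beyond this threshold. Splitting $\int_0^1\bar\alpha(t)/t\,dt$ at this threshold and combining the two contributions with the absolute bound $|I|\le(r-1)n/r$ (needed to push the self-consistent bound all the way up to $\lambda=1$) should yield $\int_0^1\bar\alpha(t)/t\,dt\le\ln 2\cdot (r-1)/r + O(d^{-1/(r-1)})$, and hence the desired $\log\iw(G)/n\le(r-1)/r+O(d^{-1/(r-1)})$ after dividing by $n\ln 2$.

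The main obstacle will be the Janson-style lower bound on $p_v$: the standard Janson inequality presupposes a product probability structure, whereas $\mu_\lambda$ is a correlated Gibbs measure on weak independent sets. While cross-edge-freeness cleanly factorizes the conditional distribution across branches, the effective ``external fields'' coming from the $d-1$ outgoing edges at each boundary vertex $u\in N_i$ still couple branch behavior to the remainder of the graph. Pushing the Janson estimate through requires a careful stochastic-dominance comparison of the wi-set Gibbs measure on $\bigcup_iN_i$ to a suitable product measure whose marginals match the $\alpha_u$'s, and the $d^{-1/(r-1)}$ scaling of the final error term reflects the combinatorial balance between the branch-count $d$ and the branch-size $r-1$ in this comparison.
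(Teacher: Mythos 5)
Your overall framework (occupancy method, Gibbs identity at $v$, integration of $\bar\alpha/t$) matches the paper, and the local identity $\alpha_v = \frac{\lambda}{1+\lambda}(1-p_v)$ with $p_v=\P_{\mu_\lambda}[\exists\,i:\ N_i\subseteq I]$ is correct. But the proposed lower bound $p_v\ge 1-\exp(-c_r\,d\,\bar\alpha^{r-1})$ is false, and the gap is not just a missing technical lemma --- the inequality contradicts the extremal example. In the $r$-partite, $r$-uniform, $d$-regular linear hypergraph $H(r,d)$ at $\lambda=\Theta(1)$, the typical weak independent set is (close to) the union of $r-1$ of the $r$ parts, so $\bar\alpha\approx\frac{r-1}{r}\cdot\frac{\lambda}{1+\lambda}=\Theta(1)$ and hence $d\,\bar\alpha^{r-1}=\Theta(d)\gg 1$, yet $p_v\approx 1/r$ (one checks directly, or from $p_v = 1 - \frac{(1+\lambda)\alpha_v}{\lambda}$). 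Your bound would force $p_v\to 1$ and hence $\bar\alpha\to 0$ beyond the $\lambda\sim d^{-1/(r-1)}$ threshold, which is simply not what happens.

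The reason the Janson/FKG heuristic fails is strong \emph{positive} correlation among the branch events $A_i=\{N_i\subseteq I\}$: in the extremal instance these events are nearly perfectly correlated (either the excluded part is $v$'s part, in which case every branch is full, or it isn't, in which case none are). While cross-edge-freeness does decouple the branches \emph{conditionally} on $I\setminus N[v]$, the conditional per-branch full probability fluctuates enormously with the outside configuration (a branch vertex externally covered by a full neighboring edge can never be occupied), and this conditioning is exactly what carries the correlation. No stochastic-domination comparison to a product measure with marginals $\alpha_u$ can repair this, since the product comparison is the thing that is wrong. There is also no reason to expect the single-branch FKG inequality $\P[N_i\subseteq I]\ge\prod_{u\in N_i}\alpha_u$ to hold: the hard-core measure on weak independent sets is not log-supermodular (the union of two weak independent sets need not be one), so FKG does not apply. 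The paper sidesteps all of this: rather than decoupling branches, it parameterizes the \emph{joint} local configuration of $N[v]$ by the number $j$ of surviving edges through $v$ and the number $k$ of isolated uncovered neighbors, writes $\bar\alpha$ both as $\sum p_{j,k}\alpha_j^v$ and as $\sum p_{j,k}\alpha_{j,k}^N$, and maximizes a linear program over all distributions $p_{j,k}$ consistent with that equality; dual feasibility of an explicit candidate yields the sharp bound $\alpha^*(\lambda)=\alpha_{\mathrm w}(r,d,\lambda)$, which then integrates to $\frac{r-1}{r}+O(d^{-1/(r-1)})$.
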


\subsection{Constructions and conjectures}
\label{secTightExamples}

Part of the appeal of Theorems~\ref{thmKahn} and~\ref{thmMatch} is that the bound is exact, with  explicit extremal examples.  While we do not have an exactly matching upper bound, we provide a construction below establishing asymptotic tightness of Theorem~\ref{thm:3unif}. 

\medskip

\noindent
{\bf A construction for Theorem~\ref{thm:3unif}}. Consider the tripartite hypergraph $K$ with $3d^2$ vertices and parts $A=\{a_1, a_2,\cdots, a_{d^2}\}$, \ $B=\{b_1, b_2,\cdots, b_{d^2}\},\ C=\{c_1, c_2,\cdots, c_{d^2}\}$ and hyperedges 
$$\{a_{kd+i},b_{kd+j},c_{id+j}\}\,,$$
for all $0\leq k\leq d-1,\ 1\leq i\leq d.$

\begin{figure}
	\centering
	\includegraphics[width=8cm]{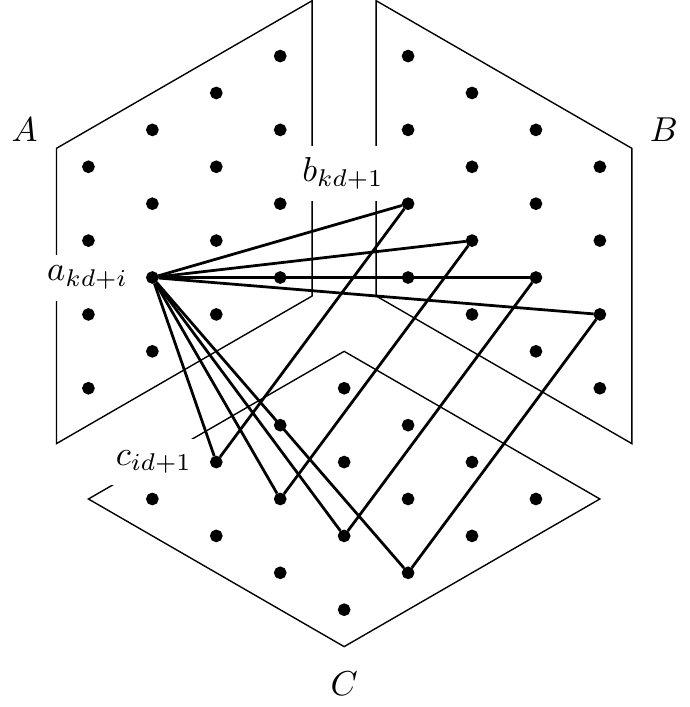}
	\caption{The neighborhood of vertex $a_{kd+i}$ in the hypergraph $K$.}
	\label{fig:cross-edge-tightness}
\end{figure}

By definition it is $3$-uniform and $d$-regular. It is also linear, since the choice of two ``adjacent" vertices (meaning those that belong to a common hyperedge) identifies $k,i,j$ and hence a unique third vertex of the edge. It is also  cross-edge free, since the graph induced by the neighborhood of a vertex is a matching. (See Figure~\ref{fig:cross-edge-tightness}.)

Now to bound the number of (strong) independent sets from below, observe that 
the graph induced by each pair of parts is a disjoint union of $d$ copies of $K_{d,d}$. Thus,
$$i_2(K)\geq (2^{d+1}-1)^{d}\geq 2^{d(d+1/2)}\,,$$
which yields
$$\frac{\log i_2(K)}{3d^2}\geq \frac{d^2+d/2}{3d^2}=\frac{1}{3}+O\left(\frac{1}{d}\right).$$

\

\medskip

We will now present constructions and give bounds that we believe are asymptotically tight in the general setting, without the (seemingly artificial) cross-edge free assumption. We will use the following easy observation.

\medskip
\noindent
{\bf Small $r$-partite hypergraphs}.

Suppose $H$ is an $r$-uniform, $d$-regular $r$-partite hypergraph on $N$ vertices.  Then $\is(H) \ge r \cdot 2^{N/r} -r +1$, and 
\begin{align}
\frac{\log \is(H)}{N} &\ge  \frac{1}{r}  + \frac{\log r}{N} -O(2^{-N/r}).
\end{align}

Thus for $N$ of order $rd$ we can get asymptotically tight graphs for Conjecture \ref{conj:strong} if we manage to construct a linear $H$. The following examples deal with some cases of $r$ and $d,$ but we are not aware of a general construction.

\medskip

\noindent
{\bf The mod graph}.

For $r=3$, let $V_1, V_2, V_3$ be vertex sets of size $d$ each identified with the integers $1, \dots, d$ and let $V(\hmod) = V_1 \cup V_2 \cup V_3$.  A triple $(v_1, v_2, v_3)$ with $v_i \in V_i$ is an edge iff $v_1 + v_2 +v_3 = 0 \ (\mbox{mod }\  d)$. This graph is $3$-uniform, $d$-regular and linear (for each $v_1 \in V_1, v_2 \in V_2$ there is a unique $v_3 \in V_3$ so that $(v_1,v_2, v_3) \in E(\hmod)$.  It is \emph{not} cross-edge free.   

We have 
\begin{align}
\is(\hmod) &= 3 \cdot 2^d -2,
\end{align}
and so
\begin{align}
\frac{\log \is(\hmod)}{3d} &= \frac{1}{3d} \log \left( 3 \cdot 2^d -2    \right )  \\
&= \frac{1}{3} + \frac{\log 3}{3d} +O(2^{-d}).  
\end{align}

Similarly, we have
\begin{align}
\iw(\hmod) &\geq 3\cdot 2^{2d} -3 \cdot 2^d +1,
\end{align}
and so
\begin{align}
\frac{\log \iw(\hmod)}{3d} &\geq \frac{1}{3d} \log \left( 3\cdot 2^{2d} -3 \cdot 2^d +1 \right )  \\
&= \frac{2}{3} + \frac{\log 3}{3d} +O(2^{-d}).  
\end{align}

For general $r$ the mod construction is not linear.     

\medskip

\noindent
{\bf A $4$-uniform construction}.

Here we describe a special case of a general construction detailed below that was brought to our attention by Dmitry Shabanov.

For $r=4$ and $d$ odd, consider the following graph.  Let $V(H_4) = V_1 \cup \cdots \cup V_4$, with each $v_i$ identified with $\{0, \dots d-1 \}$. Say $(v_1, v_2, v_3, v_4) \in E(H_4)$ if $v_1 + v_2 = v_3 \ (\mbox{mod }\  d)$ and $v_1 + 2 v_2 = v_4 \ (\mbox{mod }\  d)$. $H_4$ has the property that any two vertices in two different parts appear in exactly one edge together.  Thus $H_4$ is linear and $d$-regular.  We have $\is(H_4) = 4 \cdot 2^d -3$ and $\iw(H_4) = 4\cdot 2^{3d} - 6\cdot 2^{2d} +4 \cdot 2^d-1$, giving
\begin{align}
\frac{\log \is(H_4) }{4d} &= \frac{1}{4} + \frac{\log 4}{4d} (1+o(1)) \\
\frac{\log \iw(H_4) }{4d} &= \frac{3}{4} + \frac{\log 4}{4d} (1+o(1)).
\end{align}

\medskip
To find examples that give the asymptotics of Conjecture \ref{conj:strong} for general $r$, we have to limit our options on the degree $d:$
\medskip

\noindent
{\bf An $r$-partite, $r$-uniform, linear hypergraph, for $r \ge 3$ and prime $d > r$}. This classical construction, whose special case $r=4$ was described above and works for all odd numbers $d$. 
For $r\ge 3$, consider the $r$-uniform, $r$-partite hypergraph $H(r,d)$ with the vertex set  $V = \cup_{i=1}^r V_i$, consisting of disjoint sets $V_i = \{1,2,\dots,d\}$, for $i =1, 2, \ldots, r$. The number of vertices is $n=rd$. An $r$-tuple $(x_1,x_2,\ldots, x_r)$, with $x_i \in V_i$, is an edge iff the following set of congruences is satisfied:
\[x_3 \equiv x_1+x_2 \ (\mbox{mod }\  d), \ 
x_4 \equiv x_1+2x_2 \ (\mbox{mod }\ d), \ldots , 
x_r \equiv x_1+(r-2)x_2 \ (\mbox{mod } \ d)\,.\]

Note that when $d$ is prime, the choice of any two of the $x_i$ from an edge determine the rest, implying that $H(r,d)$ is a linear hypergraph. The number of strong independent sets equals $r \times 2^d - (r-1)$, coming from choosing any subset of a particular class $V_i$ of vertices. The number of weak independent sets is at least $r2^{(r-1)d}-\binom{r}{2}2^{(r-2)d}.$ Thus
\begin{align*}
\frac{\log \is(H(r,d)) }{rd} &= \frac{1}{r} + \frac{\log r}{rd} - O(2^{-d}) \\
\frac{\log \iw(H(r,d)) }{rd} &= \frac{r-1}{r} + \frac{\log r}{rd} - O(2^{-d}).
\end{align*}

Considering the above constructions, we conjecture that the constant in front of  the second order term in the bound on the normalized logarithm of $\is(H)$ and $\iw(H)$ should be $1$.
\begin{conj}
	\label{conj:linear}
	Suppose $r,d \ge 2$.  Then for any $r$-uniform, $d$-regular linear hypergraph $H$ on $n$ vertices
	\begin{align}
	\frac{\log \is(H)}{n} &\le \frac{1}{r} + \frac{\log r}{rd}  
	\intertext{and}
	\frac{\log \iw(H)}{n} &\le \frac{r-1}{r} + \frac{\log r}{rd}.
	\end{align}
\end{conj}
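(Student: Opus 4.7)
The plan is to extend the occupancy method that powers Theorems \ref{thm:3unif} and \ref{thm:weak} and to sharpen the analysis to recover the conjectured second-order constant $\frac{\log r}{rd}$. Introduce the hard-core partition function
\[ Z_H^{(k)}(\lam) \;=\; \sum_{I \in \I_k(H)} \lam^{|I|} \]
for $k \in \{2, r\}$, so that $i_k(H) = Z_H^{(k)}(1)$, and let $\alpha_H^{(k)}(\lam) := \frac{1}{n} \E_{\mu_\lam} |I|$ be the occupancy fraction under the Gibbs measure $\mu_\lam(I) \propto \lam^{|I|}$. The identity
\[ \frac{1}{n} \log Z_H^{(k)}(\lam) \;=\; \int_0^{\lam} \frac{\alpha_H^{(k)}(t)}{t}\, dt \]
reduces the problem to uniformly bounding $\alpha_H^{(k)}(\lam)$ in $\lam>0$ by the corresponding occupancy fraction of the conjecturally extremal $r$-partite hypergraph $H(r,d)$.

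First I would pick a uniformly random vertex $v$, condition on the configuration of $I$ outside the closed neighborhood $N[v]$, and express the single-site marginal $\P(v \in I)$ as an average of local conditional probabilities. Linearity of $H$ forces the $d$ hyperedges through $v$ to be pairwise disjoint off $v$, so $|N(v)| = d(r-1)$; any remaining entanglement in $N[v]$ is carried by cross-edges among these neighbors. The goal is then to analyze a linear program over the occupancy profile of $N[v]$ whose feasible set is carved out by the Gibbs consistency conditions plus the $\I_k$ constraints, in the spirit of \cite{davies2015}. I would try to show that this LP is maximized at the ``minimally entangled'' profile in which $v$ sits at the core of a sunflower of $d$ disjoint hyperedges with no cross-edges --- precisely the local picture around any vertex of $H(r,d)$.

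Plugging the extremal profile into the occupancy integral and computing $Z_{H(r,d)}^{(k)}(1)$ directly, using that each part $V_i$ contributes $2^d$ configurations in the strong case and $2^{(r-1)d}$ in the weak case, yields $\log i_k(H(r,d))/(rd) = (k-1)/r + (\log r)/(rd) + o(1/d)$ for $k \in \{2,r\}$, as required. The main obstacle is controlling cross-edges: a cross-edge between two neighbors of $v$ couples the occupancies of two incident hyperedges, and one must verify monotonically that such couplings only decrease $\P(v \in I)$. This was already the bottleneck in Theorem \ref{thm:3unif}, forcing the cross-edge-free hypothesis even at $r=3$, and as the authors note, the LP becomes combinatorially unwieldy for $r > 7$; some combination of correlation inequalities (e.g.\ Harris--FKG) with a more refined LP relaxation seems necessary to absorb the cross-edge contributions uniformly in $r$.

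For the weak independent set half of the conjecture the gap is much more severe, since Theorem \ref{thm:weak} delivers an error of $O(d^{-1/(r-1)})$ whereas the conjecture demands $O(1/d)$. Closing this gap almost certainly requires an argument that is more structurally sensitive than the hypergraph containers of \cite{saxton2015hypergraph,balobanov2018number}, which average over too coarse a family of local configurations to detect the $r$-partite extremizer. A natural candidate is a hybrid of the occupancy LP with an entropy or cluster-expansion step tuned to the $r$-partite geometry, but I expect this half of the conjecture to be the genuinely difficult one and the principal source of new ideas needed to push the programme through.
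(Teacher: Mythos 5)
This statement is Conjecture~\ref{conj:linear}, which the paper explicitly leaves open; there is no proof in the paper to compare against, and your text is a research plan rather than a proof. Reading it as a proof attempt, the gaps are exactly the ones you yourself flag, and they are genuine, not cosmetic.

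The central missing step is the claim that cross-edge couplings ``only decrease $\P(v \in I)$.'' Nothing in the proposal establishes this. It is not a correlation inequality you can invoke off the shelf: the FKG/Harris inequality controls monotone events under the hard-core measure, but adding a cross-edge changes the underlying hypergraph (hence the measure itself), and the occupancy fraction is not obviously monotone under adding edges in a way that preserves $d$-regularity. In the paper's own linear-programming framework, cross-edges enlarge the configuration class $\C$ with local configurations whose partition functions are no longer simple products over disjoint sub-edges, and the dual-feasibility verification (the heart of the proofs of Propositions~\ref{propOccupancyWeak} and~\ref{thmoccupancy3}) becomes genuinely harder, which is why the paper imposes the cross-edge-free hypothesis even at $r=3$. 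Without a concrete monotonicity lemma or a solved LP on the enlarged configuration class, this step is a conjecture, not a proof.

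Second, even granting cross-edge control, your plan does not reach the claimed $O(\log r /(rd))$ second-order term. The paper's occupancy bound for strong independent sets yields $\frac{1}{3} + O(1/d)$ only at $r=3$ (and remarks the method is not pushed past $r=7$), and for weak independent sets the bound in Theorem~\ref{thm:weak} is $\frac{r-1}{r} + O(d^{-1/(r-1)})$, which is much weaker than the conjectured $O(1/d)$. You acknowledge this gap but offer only a vague hope for an ``entropy or cluster-expansion step tuned to the $r$-partite geometry.'' That is an honest statement of where new ideas are needed, not a proof. As written, the proposal confirms the conjecture's plausibility via the construction $H(r,d)$ (which the paper also gives) and re-derives the strategy the paper already uses, but supplies no new mechanism that would close either the cross-edge obstruction or the $d^{-1/(r-1)}$-versus-$d^{-1}$ discrepancy.
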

This is consistent with the above constructions. The bound for $r=2$ is attained by $K_{d,d}$, for $r=3$ by the mod graph and for $r\geq 4$ by our last example.

\section{Occupancy fraction and the hard-core model}
\label{secHardCore}

In this section we give a brief overview of the method we will use to prove Theorems~\ref{thm:3unif} and~\ref{thm:weak}.

Let $G$ be an $r$-uniform hypergraph on $n$ vertices.  Fix some $k \in \{2, \dots r\}$.   The \emph{hard-core} model on $G$ at fugacity $\lam$ is a random independent set $\mathbf I \in \mathcal I_k(G)$ chosen with probability 
\begin{align}
\label{eqhardcore}
{\Pr}_{\lam} (\mathbf I = I) &=  \frac{ \lam^{|I|}} {Z_G(\lam)} 
\end{align}
where 
\begin{align}
Z_G(\lam) &= \sum_{I \in \mathcal I_k(G)} \lam^{|I|}  \,.
\end{align}
We omit the dependence on $k$ in the notation, as it should be clear from context.  As mentioned above, the function $Z_G(\lam)$ is the independence polynomial, or  the partition function of the hard-core model: the normalizing constant that ensures that~\eqref{eqhardcore} defines a valid probability distribution. The partition function encodes a large amount of information about the independent sets of $G$: for example, $Z_G(1) = i_k(G)$, and the highest order term of the polynomial tells us both the size and number of maximum independent sets of $G$. 

The main technique used in this paper for obtaining upper bounds on the number of independent sets follows the \emph{occupancy method} of Davies, Jenssen, Perkins, and Roberts~\cite{davies2015}. We define the \emph{occupancy fraction} of the hard-core model on $G$ as
\begin{align}
\overline \alpha_G(\lam) &= \frac{1}{|V(G)|} \E_{\lam} |\mathbf I| \, ,
\end{align}
that is, the expected fraction of vertices of $G$ in a random independent set $\mathbf I$ drawn from the hard-core model. 

Crucially for our purposes, the occupancy fraction is the scaled derivative of the logarithm of the partition function:
\begin{align}
\overline \alpha_G(\lam) &= \frac{1}{|V(G)| } \frac{\sum_{I \in \mathcal I_k(G)} |I| \lam^{|I|}}{ Z_G(\lam) } \\
&= \frac{\lam} {|V(G)| }  \frac{Z_G'(\lam)     } {Z_G(\lam)} \\
&= \frac{\lam} {|V(G)| } (\ln Z_G(\lam))'  \, .
\end{align}

As $\overline \alpha_G(0) = 0$ for any $G$, we can write
\begin{align}
\label{eqOccInt}
\frac{1}{|V(G)|} \ln Z_G(\lam) &= \int_0^{\lam}  \frac{\overline \alpha_G(t)}{t} \, dt \, . 
\end{align}

In~\cite{davies2015}, Davies, Jenssen, Perkins, and Roberts proved the following theorem.
\begin{thm}
\label{thmIndOcc}
For all $d$-regular graphs $G$ and all $\lam >0$,
\begin{align}
\overline \alpha_G(\lam) &\le \overline \alpha_{K_{d,d}}(\lam) 
\end{align}
with equality if and only if $G$ is a union of copies of $K_{d,d}$.
\end{thm}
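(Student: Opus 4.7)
The plan is to apply the occupancy method: establish a local upper bound at each vertex $v$ and then average using $d$-regularity. I would begin by fixing $v \in V(G)$ and revealing the restriction of $\mathbf I$ to $V \setminus N[v]$, which determines a random ``available'' set $B \subseteq N(v)$ consisting of those neighbors of $v$ whose external neighbors are all unoccupied. Conditional on $B$, the restriction of $\mathbf I$ to the closed neighborhood $N[v]$ is distributed as the hard-core model on $G[\{v\}\cup B]$, giving
\begin{align}
\Pr[v \in \mathbf I \mid B] = \frac{\lam}{\lam + Z_{G[B]}(\lam)}, \qquad \Pr[u \in \mathbf I \mid B] = \frac{\lam\, Z_{G[B \setminus N[u]]}(\lam)}{\lam + Z_{G[B]}(\lam)} \ \text{for } u \in B.
\end{align}
Summing $\E[|\mathbf I \cap N[v]|]$ over $v$ and using $d$-regularity (each vertex lies in exactly $d+1$ closed neighborhoods) yields $\sum_v \E[|\mathbf I \cap N[v]|] = (d+1)n\, \overline\alpha_G(\lam)$, so it suffices to bound this average closed-neighborhood occupancy by its value for $K_{d,d}$, which by vertex-transitivity equals $(d+1)\,\overline\alpha_{K_{d,d}}(\lam)$.

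The heart of the argument is a local extremal bound, in two stages. In the first stage I would prove \emph{edge-deletion monotonicity}: for each fixed $B$, removing an edge from $G[B]$ weakly increases the conditional closed-neighborhood occupancy $(\lam + \sum_{u\in B}\lam Z_{G[B\setminus N[u]]}(\lam))/(\lam + Z_{G[B]}(\lam))$, so the worst (largest) case is when $G[B]$ has no internal edges --- which is exactly the situation in $K_{d,d}$, whose open neighborhoods are independent sets. In the second stage, assuming $G[B]$ is edgeless, the conditional expectation depends on $B$ only through its size, collapsing to $\phi(b) := (\lam + b\lam(1+\lam)^{b-1})/(\lam + (1+\lam)^b)$. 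The remaining task is to bound $\E[\phi(|B|)]$ by $(d+1)\,\overline\alpha_{K_{d,d}}(\lam)$; the distribution of $B$ is not a free parameter but is forced by the global hard-core Gibbs property, and the cleanest framing is a linear program in the joint probabilities of local configurations on $N[v]$ with constraints encoding the Gibbs ratios. Direct analysis of this LP identifies the $K_{d,d}$ profile --- in which $|B|$ is supported on $\{0, d\}$ with partition-function-consistent weights --- as the unique maximizer. For the equality characterization, tracing each inequality backwards forces $G[N(v)]$ to be edgeless for every $v$ and $|B(v)| \in \{0, d\}$ almost surely, which combined with $d$-regularity implies each component of $G$ is $K_{d,d}$.

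The principal obstacle is the second stage of the local bound. The edge-deletion monotonicity can be verified by direct algebraic manipulation of the ratio: deleting an edge $\{u_1, u_2\}$ from $G[B]$ simultaneously increases the denominator $\lam + Z_{G[B]}(\lam)$ and some of the numerator terms $Z_{G[B\setminus N[u]]}(\lam)$, and the net effect favors deletion once common factors are grouped. The second stage is more delicate because the distribution of $|B|$ is entangled with the partition function itself --- different $d$-regular graphs $G$ induce different Gibbs-consistent distributions on $B$ --- so a naive application of concavity against $\phi$ fails. Setting up the LP so that the Gibbs ratio constraints eliminate enough degrees of freedom, and showing that $K_{d,d}$ is the exact extremizer rather than merely an asymptotic maximizer, is where the technical weight of the occupancy method lies.
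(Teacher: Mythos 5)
The paper does not itself prove Theorem~\ref{thmIndOcc}; it cites it from Davies--Jenssen--Perkins--Roberts~\cite{davies2015}, and Section~\ref{secHardCore} only sketches the general occupancy-method framework. Your framework (reveal the exterior, condition on the available set $B$, observe that $\mathbf I\cap N[v]$ is hard-core on $G[\{v\}\cup B]$, relate the two ways of writing $\overline\alpha$ via an LP over local configurations) is exactly the one that paper uses.

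However, your first-stage reduction has a genuine gap. You propose to bound the conditional closed-neighborhood occupancy
\begin{align}
\beta(C) = \frac{\lam + \sum_{u\in B}\lam Z_{G[B\setminus N[u]]}(\lam)}{\lam + Z_{G[B]}(\lam)}
\end{align}
pointwise by $\phi(\size{B})$ using edge-deletion monotonicity, and then run an LP on the distribution of $\size{B}$. The monotonicity itself is plausibly true, but it is the wrong tool: deleting edges from $G[B]$ increases $\beta(C)$ \emph{and} decreases $\alpha^v(C)=\Pr[v\in\mathbf I\mid B]=\lam/(\lam+Z_{G[B]})$, since $Z_{G[B]}$ grows. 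The only nontrivial constraint you have on the configuration distribution is the Gibbs identity $\E[\beta(C)] = (d+1)\,\E[\alpha^v(C)]$. After replacing each $C$ by the star $S_{\size{B}}$ you only obtain the one-sided chain $\E[\phi(\size B)]\ge \E[\beta(C)] = (d+1)\E[\alpha^v(C)]\ge (d+1)\E[\alpha^v(S_{\size B})]$, which gives a \emph{lower} bound on $\E[\phi(\size B)]$ --- useless for the upper bound you need, and the resulting "LP on $\size{B}$" has no valid constraint to close against. Since $\phi$ is increasing and $\size{B}\le d$, without a constraint the maximum is trivially $\phi(d)$, which is too large.

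What actually has to be verified is the LP \emph{dual} constraint: for the dual multiplier $\Lambda_c$ determined by tightness at $S_0$ and $S_d$, one must show $(1-\Lambda_c)\,\alpha^v(C) + \Lambda_c\,\alpha^N(C) \le \alpha_{K_{d,d}}(\lam)$ for every local configuration $C$, where $\alpha^N(C)=\tfrac1d\,\E[\size{\mathbf I\cap B}\mid C]$. Because $\Lambda_c\in(0,1)$ here, and edge deletion pushes $\alpha^v$ down while pushing $\alpha^N$ up, this combination is \emph{not} monotone under edge deletion, so you cannot simply pass to the edgeless case; the reduction to star configurations (or the direct verification over all $C$) requires a more careful argument than the one you outline. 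Your second stage (the $K_{d,d}$-supported-on-$\{0,d\}$ LP optimum and the equality characterization) is the right target, but the bridge to it is missing.
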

Theorem~\ref{thmIndOcc} strengthens Theorem~\ref{thmKahn} as we can  integrate the bound in Theorem~\ref{thmIndOcc} from $0$ to $1$ to obtain Theorem~\ref{thmKahn}, as in~\eqref{eqOccInt}.  Theorem~\ref{thmMatch} was proved in the same paper by proving the corresponding result for the occupancy fraction of matchings in regular graphs. To summarize, to prove an upper bound on the number of independent sets in $G$ it suffices to prove an upper bound on the occupancy fraction of the hard-core model at all fugacities $\lambda \in (0,1)$. 

To do this, we consider a collection $\set{N_v}_{v\in V(G)}$ of \emph{neighborhoods} $N_v\subset V(G)$ such that each vertex $u$ is counted in the same number $D$ of neighborhoods, i.e., $\size{\set{v: u\in N_v}} = D$ does not depend on $u$. Then we can write the occupancy fraction $\overline\alpha$ in two different ways:
\begin{align}
  \overline\alpha &= \frac{1}{n} \sum_v \Pr(v\in I) \\
  &= \frac{1}{n} \sum_u \frac{1}{D}\sum_{v: u\in N_v} \Pr(u\in I) = \frac{1}{nD} \sum_v \E(\size{I\cap N_v}).
\end{align}
In either case, we can condition on the value $J=I\setminus N$ of the independent set outside $N_v$, giving
\begin{align}
  \overline\alpha &= \frac{1}{n} \sum_v \sum_J p_J \Pr(v\in I \mid I\setminus N_v = J)\label{eqnTwoFormulations}\\
  &= \frac{1}{nD} \sum_v \sum_J p_J \E(\size{I\cap N_v} \mid I\setminus N_v = J),
\end{align}
where $p_J = \Pr(I\setminus N_v = J)$. In practice, we will group terms in the sum over $J$ by the distribution of $N_v\cap I$ given $J$, yielding a sum over a relatively small number of possible \emph{local configurations} $C_v$. Given the local configuration it is typically not difficult to compute the conditional distribution of $I\cap N_v$, but the probabilities $p_{C_v}$ are global properties depending on the graph. Instead of calculating them directly, we note that the equality of the two formulae for $\alpha$ may in fact yield nontrivial constraints on the possible values of $p_{C_v}$.  This is because when $N_v$ is actually a neighborhood of $v$ configurations where $v$ is likely to be in $I$ must in expectation have fewer neighbors of $v$ in $I$. 

To bound $\alpha$, we merely bound the sum for \emph{all} probability distributions $p_{C_v}$ subject to the equality of the two expressions for $\alpha$. This is a linear program in variables $p_{C_v}$; any feasible solution to its dual implies a bound on the objective function. In some cases a tighter bound may be obtainable by enforcing several such constraints.  Formulating such constraints and solving the resulting linear programs is the essence of the occupancy method; see~\cite{Cohen2015,girthCubicA,PottsExtremes} for several recent examples.

\section{Weak independent sets: proof of Theorem~\ref{thm:weak}}
\label{secThmWeak}

Applying the method outlined in the previous section, we prove an upper bound on the weak independent set occupancy fraction in linear, cross-edge free hypergraphs.  

\newcommand{\llambda}{\mu}

Recall that the neighborhood of $v$ is  $N(v) = \{u\in V(H)\mid u\sim v\}$. Note that since $H$ is $d$-regular, $r$-uniform, and linear, every $u\in V(H)$ is in exactly $d(r-1)$ neighborhoods. For a fixed vertex $v$, call a vertex $u\sim v$ \emph{externally covered} by $I\subseteq V(H)$ if there is some edge $f\ni u$ such that $f\cap N(v) = \{u\}$ and $f\setminus\{u\} \subseteq I$. Conditioned on $I\setminus N(v) = J$, $I$ cannot contain any neighbor $u$ of $v$ which is externally covered by $J$, so such a vertex may be safely ignored when calculating $\E(|I|\cap N(v) \mid I\setminus N(v) = J)$. We can also ignore the hyperedges containing $u$, since those constraints are automatically satisfied by ensuring that $u\not\in I$.

Given an independent set $I$ and vertex $v$, define the \emph{local configuration} $C_v$ to be the following hypergraph on the vertex set $\hat N$  consisting of $v$ and its externally uncovered neighbors: for each edge $e$ of $H$, include the edge $e\cap \hat N$ as an edge of $C_v$ if (a) $e\cap \hat N \neq \emptyset$, (b) $e\setminus \hat N \subseteq I$, and (c) $e$ does not contain any externally covered neighbor of $v$. Note that any edge containing $v$ satisfies both conditions (a) and (b), but may be omitted from $C_v$ if it contains an externally covered vertex. Conditioned on $I\setminus N(v)$, the edges of $C_v$ are the remaining constraints on $I\cap N(v)$ to ensure that $I$ is a weak independent set, and the conditional distribution of $I\cap N(v)$ is precisely the distribution given by the hard-core model on $C_v$.

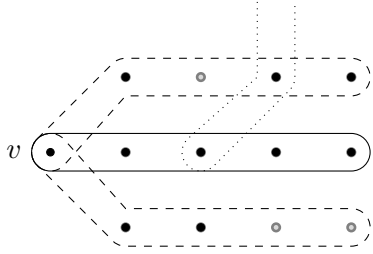
\begin{figure}
  \centering
  \begin{tikzpicture}
    \pgfmathsetmacro{\width}{.25}
    \tikzstyle{spacer}=[inner sep={\width cm*.7071}, outer sep=0cm, circle, draw=none, fill=none]
    \node[spacer, label={left:$v$}] (v) at (0,0) {};
    \node[vertex] at (v) {};
    \foreach \i in {1,2,3}{
      \foreach \j in {1,2,3,4}{
        \node[spacer] (u\i\j) at (\j, {2-\i}) {};
        \node[vertex, draw=gray, fill=lightgray, line width=1pt] at (u\i\j) {};
      }
    }
    \foreach \i in {1,3,4}{\node[vertex] at (u1\i) {};}
    \foreach \i in {1,2,3,4}{\node[vertex] at (u2\i) {};}
    \foreach \i in {1,2}{\node[vertex] at (u3\i) {};}
    \draw[dashed] (v.135) to (u11.135) arc (135:90:\width) to (u14.90) arc (90:-90:\width) to (u11.270) to (v.315) arc (315:135:\width);
    \draw (v.90) to (u24.90) arc (90:-90:\width) to (v.270) arc (270:90:\width);
    \draw[dashed] (v.215) to (u31.215) arc (215:270:\width) to (u34.270) arc (-90:90:\width) to (u31.90) to (v.45) arc (45:215:\width);
    \node[spacer] (x) at (3,2) {};
    \draw[dotted] (x.180) to (u13.180) to (u22.135) arc (135:315:\width) to (u13.315) arc (-45:0:\width) to (x.0);
  \end{tikzpicture}
  \caption{A neighborhood configuration of a vertex $v$ in a $3$-regular, $5$-uniform linear hypergraph (in the case of weak independent sets). Externally covered vertices are greyed out (their external neighbors in $I$ are not shown) and are omitted from the configuration $C_v$ along with the edges connecting them to $v$ (dashed). The dotted edge is a cross-edge, which we disallow. This configuration corresponds to parameters $j=1$ and $k=5$.}
  \label{fig:weak-neighborhood}
\end{figure}

In the case where $H$ is cross-edge free the only possible local configurations consist of $v$, $j\leq d$ edges containing $v$ each of size $r$, and $k\leq (d-j)(r-2)$ vertices whose adjacencies to $v$ have been omitted from $C_v$. The configuration is completely characterized by the parameters $j$ and $k$. (If cross-edges were allowed, $C_v$ might have additional edges not containing $v$.) Keeping the above shorthand convention $\llambda = 1+\lambda$, the partition function of such a local configuration is $Z_{j,k} = (Z_{j}^- + \lambda Z_{j}^+)\llambda^k$, where $Z_{j}^- = \llambda^{(r-1)j}$ and $Z_{j}^+ = (\llambda^{r-1}-\lambda^{r-1})^j$ count independent sets on the neighbors of $v$ in $C_v$, conditioned on $v\not\in I$ and $v\in I$, respectively.
The conditional probability that $v \in I$ given the local cofiguration is then
\begin{align}
  \alpha^v_{j} &= \frac{\lambda Z_j^+}{Z_j^- + \lambda Z_j^+}.
\end{align}
Using the formulas
\begin{align*}
\left(Z_{j}^{-}\right)^{\prime} &=\frac{(r-1)j}{\mu} Z_{j}^{-} \\
\lambda\left(Z_{j}^{+}\right)^{\prime} &=\frac{(r-1) j}{\mu}\left(\lambda Z_{j}^{+}-Z_{j}^{+} \frac{\lambda^{r-1}}{\mu^{r-1}-\lambda^{r-1}}\right),
\end{align*}
the conditional expectation of the fraction of occupied vertices among the neighbors of $v$ (not including $v$ itself) is
$$
\begin{aligned}
	\alpha_{j, k}^{N} &= \frac{1}{d(r-1)}\left(k \frac{\lambda}{\mu}+\lambda \frac{\left(Z_{j}^{-}+\lambda Z_{j}^{+}\right)^{\prime}}{Z_{j}^{-}+\lambda Z_{j}^{+}}-\frac{\lambda Z_{j}^{+}}{Z_{j}^{-}+\lambda Z_{j}^{+}}\right) \\
	&=\frac{\lambda}{d(r-1)}\left(\frac{k}{\mu}+\frac{(r-1) j}{\mu}\cdot \frac{Z_{j}^{-}+\lambda Z_{j}^{+}-Z_{j}^{+} \frac{\lambda^{r-1}}{\mu^{r-1}-\lambda^{r-1}}}{Z_{j}^{-}+\lambda Z_{j}^{+}}\right) \\
	&=\frac{\lambda}{d(r-1)}\left(\frac{k}{\mu}+\frac{(r-1) j}{\mu}\left(1-\frac{Z_{j}^{+} \frac{\lambda^{r-1}}{\mu^{r-1}-\lambda^{r-1}}}{Z_{j}^{-}+\lambda Z_{j}^{+}}\right)\right) \\
	&=\frac{\lambda}{\mu d (r-1)}\left(k+(r-1)j-(r-1)j \frac{\alpha_{j}^{v}}{\lambda}\left(\frac{\lambda^{r-1}}{\mu^{r-1}-\lambda^{r-1}}\right)\right)
\end{aligned}
$$

Following \eqref{eqnTwoFormulations}, we can write
\begin{align}
\label{eqOvAlpha}
  \overline\alpha &= \sum_{j,k} p_{j,k} \alpha_j^v = \sum_{j,k} p_{j,k} \alpha_{j,k}^N,
\end{align}
where $p_{j,k}$ is the probability that $C_v$ is the local configuration with parameters $j,k$ when we pick $v$ uniformly at random and $I$ from the hard-core model with fugacity $\lambda$.  That is, we take expectations over the conditional expectations given by the preceding formulas.

To apply the occupancy method, we relax the optimization problem of maximizing $\overline \alpha_G(\lam)$ over all graphs to maximizing $\overline \alpha$ as given by~\eqref{eqOvAlpha} over all probability distributions $p_{j,k}$ subject to the constraint that these two formulations in~\eqref{eqOvAlpha} are equal. This yields the primal LP
\begin{align}
  \alpha^*(\lam) =\max \sum_{j,k} p_{j,k} \alpha_j^v &\qquad \text{s.t.}\\
  \sum_{j,k} p_{j,k} (\alpha_j^v - \alpha_{j,k}^N) &= 0\\
  \sum_{j,k} p_{j,k} &= 1\\
  p_{j,k} &\geq 0 \,.
\end{align}

Following the discussion in Section~\ref{secHardCore}, we have that for any $d$-regular, linear, cross-edge free hypergraph $G$,
\[ \overline \alpha_G(\lam) \le \alpha^*(\lam) \,,\]  
and therefore,
\begin{equation}
\label{eqlogWeq}
\frac{\log \iw(G)}{n} \le  \frac{1}{\ln 2}  \int_0^{\lam}  \frac{ \alpha^*(t)}{t} \, dt \,.
\end{equation}

In what follows we derive an upper bound on $\alpha^*(\lam)$ and integrate this bound to obtain Theorem~\ref{thm:weak}.  

\begin{prop}\label{propOccupancyWeak}
 For any $\lam \in [0,1]$, 
\begin{align}
  \frac{ \alpha^*(\lam)}{\lam} 
  &= \frac{(r-1) \parens*{\llambda \lambda^r - \lambda\llambda^r} \llambda^{d(r-1)} + \parens*{\llambda^2 \lambda^r + \lambda (r-\llambda-1) \llambda^r} \parens*{\llambda^{r-1} - \lambda^{r-1}}^d}%
          {\llambda \parens*{r \parens*{\llambda \lambda^r - \lambda \llambda^r} \llambda^{d(r-1)} + \lambda \parens*{\llambda \lambda^r + (r-\llambda) \llambda^r} \parens*{\llambda^{r-1} - \lambda^{r-1}}^d}} \\
          & =: \frac{\alpha_{\mathrm w}(r,d,\lam)}{\lam}\,,
\end{align}
where $\alpha^*(\lam)$ is defined in the linear program above, and where we have written $\llambda \defeq \lambda + 1$ for brevity.
\end{prop}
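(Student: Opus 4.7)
The proposition asks us to solve in closed form the linear program defining $\alpha^*(\lam)$, which has two linear constraints (normalization and the equality of the two expressions for $\overline\alpha$) plus non-negativity. My plan is to apply LP duality, using an educated guess for the support of the primal optimum and then verifying optimality.

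First, I would note that $\alpha_j^v$ depends only on $j$ while $\alpha_{j,k}^N$ is affine in $k$ for fixed $j$. Consequently, the primal optimum is supported on at most two $(j,k)$ pairs, and in the dual minimization of $\beta = \max_{j,k}[(1-\tau)\alpha_j^v + \tau\alpha_{j,k}^N]$ the affine dependence on $k$ forces the maximizing $k$ to lie in $\{0,(d-j)(r-2)\}$, reducing the search to $2(d+1)$ candidates.

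Second, I would conjecture that the primal support is $\{(0,d(r-2)),\,(d,0)\}$---the ``opposite extremes'' in which either no hyperedge at $v$ survives (so $v$ faces no edge constraints from its neighborhood, but has $d(r-2)$ loose neighbors to gain expected occupancy against) or every hyperedge at $v$ survives intact. Writing $p=p_{0,d(r-2)}$ and $q=p_{d,0}$, the two LP constraints
\begin{align}
p+q=1, \qquad p\,(\alpha_0^v-\alpha_{0,d(r-2)}^N)=q\,(\alpha_{d,0}^N-\alpha_d^v)
\end{align}
determine $p,q$. Plugging in $\alpha_0^v=\lam/\mu$ and $\alpha_{0,d(r-2)}^N=\lam(r-2)/[\mu(r-1)]$ gives $\alpha_0^v-\alpha_{0,d(r-2)}^N=\lam/[\mu(r-1)]$. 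For the other bracket, substituting $Z_d^-=\mu^{(r-1)d}$ and $Z_d^+=(\mu^{r-1}-\lam^{r-1})^d$ into the formulas above yields
\begin{align}
\alpha_{d,0}^N-\alpha_d^v \;=\; \frac{\lam\,\mu^{r-1}\bigl[\mu^{(r-1)(d-1)}-(\mu^{r-1}-\lam^{r-1})^{d-1}\bigr]}{\mu\bigl(\mu^{(r-1)d}+\lam(\mu^{r-1}-\lam^{r-1})^d\bigr)}\,.
\end{align}
Substituting back into $p\alpha_0^v+q\alpha_d^v$ and simplifying using the identities $\mu\lam^r-\lam\mu^r=\lam\mu(\lam^{r-1}-\mu^{r-1})$ and $\mu\lam^{r-1}+(r-2-\lam)\mu^{r-1}=(r-1)\mu^{r-1}-\mu(\mu^{r-1}-\lam^{r-1})$, the resulting rational expression should agree with $\alpha_{\mathrm w}(r,d,\lam)/\lam$ after cancelling a common factor of $-\lam\mu(\mu^{r-1}-\lam^{r-1})$ from numerator and denominator.

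The main obstacle is justifying the choice of support, i.e., proving that the conjectured primal solution really is optimal. For this I would exhibit dual multipliers $(\beta^*,\tau^*)$---determined by requiring the dual constraint to be tight at both $(0,d(r-2))$ and $(d,0)$---and then verify
\begin{align}
(1-\tau^*)\alpha_j^v + \tau^*\alpha_{j,k}^N \;\le\; \beta^* \qquad \text{for every admissible }(j,k).
\end{align}
Since the LHS is affine in $k$, this reduces to checking at most $d+1$ univariate inequalities in $j$. I expect these to follow from a monotonicity or log-concavity property of the sequence $(\alpha_j^v)_{j=0}^d$: increasing $j$ adds further edge constraints at $v$, so $\alpha_j^v$ should decrease in $j$ at a diminishing rate, forcing the relevant convex combination to be single-peaked at the endpoints. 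Establishing this shape property in closed form---so that the $2(d+1)$ dual inequalities collapse to just the two that define $\tau^*$---is the most technically delicate part of the argument.
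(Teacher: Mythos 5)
Your plan is the same as the paper's: guess that the primal optimum is supported on the two extreme configurations $(j,k)\in\{(0,d(r-2)),(d,0)\}$, solve the two equality constraints for the weights, read off the objective, and then certify optimality by exhibiting dual multipliers that are tight at those two configurations. Your reduction of the remaining dual check to $k=(d-j)(r-2)$ via the affine dependence in $k$ is also exactly what the paper does (using $\tilde\Lambda_c\ge 0$). The intermediate algebra you sketch (e.g.\ $\alpha_0^v-\alpha_{0,d(r-2)}^N=\lam/[\mu(r-1)]$) is correct.

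The gap is precisely where you say it is, and it is substantial: you do not actually prove dual feasibility for all $j\in\{0,\dots,d\}$, and the heuristic you offer does not supply the proof. The paper does \emph{not} argue via monotonicity or log-concavity of $(\alpha_j^v)_j$. Instead, after writing the constraint as $f(j)\le f(d)$, it converts this to the statement $g(j)\le g(d)$ where $g(j)=jC_j/B_j$ with $B_j=\mu^{js}-\nu^j$, $C_j=\mu^{js}\nu+\lambda\gamma\nu^j$ (and $s=r-1$, $\nu=\mu^{s}-\lambda^{s}$, $\gamma=\mu^{s}-\lambda^{s-1}(\lambda+s)$). Showing that $g$ is increasing is then reduced to the consecutive case $d=j+1$, which after the substitution $\rho=\mu^s$, $\sigma=\lambda^s$ becomes a polynomial inequality proved by induction on $j$ — ultimately resting on the elementary fact $\rho\ge\lambda\gamma$. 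Nothing in this chain is a direct consequence of a ``single-peaked'' shape of $\alpha_j^v$; the quantity $g(j)$ mixes $\alpha_j^v$ with the partition-function ratios $Z_j^\pm$ nontrivially, and its monotonicity needs to be established directly. So while your scaffolding mirrors the paper's, the core technical argument — the one that makes the proposition true — is absent, and the route you gesture at would require a genuinely different (and unproven) structural property of the configuration data.
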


Before proving Proposition~\ref{propOccupancyWeak}, we derive Theorem~\ref{thm:weak} from it.  
 Along with~\eqref{eqlogWeq}, the following claim  gives Theorem~\ref{thm:weak}.

\begin{claim}
\label{claimWeakAsymp}
For any fixed $r \ge 3$,
\begin{align}
\frac{1}{\ln 2}\int_0^1 \frac{ \alpha_{\mathrm{w}}(r,d,\lam)}{\lam} \, d \lam = \frac{r-1}{r} + \frac{c_{\mathrm w,r}}{d^{1/(r-1)}} (1+o_d(1))
\end{align}
where 
\begin{align}
c_{\mathrm w,r}&= \frac{1}{\ln 2} \int_0^\infty   \frac{1}{r + r^2 \left( e^{c^{r-1}}-1  \right) } \, d x  .
\end{align}
\end{claim}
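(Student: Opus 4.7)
The plan is to compare the integrand $\alpha_{\mathrm{w}}(r,d,\lambda)/\lambda$ to its pointwise limit $(r-1)/(r(1+\lambda))$ as $d\to\infty$ and isolate the contribution of the transition window $\lambda = \Theta(d^{-1/(r-1)})$. Writing $\mu = 1+\lambda$ and $\eta_d(\lambda) := (1-(\lambda/\mu)^{r-1})^d$, and dividing numerator and denominator of the expression in Proposition~\ref{propOccupancyWeak} by $\mu^{d(r-1)}$, rewrites the integrand as a rational function of $\lambda$, $\mu$, and $\eta_d(\lambda)$. For any fixed $\lambda>0$ bounded away from zero, $\eta_d(\lambda)$ decays exponentially in $d$, so $\alpha_{\mathrm{w}}/\lambda \to (r-1)/(r\mu)$. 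The integral of this pointwise limit is
\[
\frac{1}{\ln 2}\int_0^1 \frac{r-1}{r(1+\lambda)}\,d\lambda = \frac{r-1}{r},
\]
which accounts for the leading-order term.

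Next I would analyze the transition window via the substitution $\lambda = cd^{-1/(r-1)}$. On this scale, $\mu \to 1$, $(\lambda/\mu)^{r-1} = c^{r-1}/d + O(d^{-r/(r-1)})$, and $\eta_d(\lambda) \to e^{-c^{r-1}}$. Using $\mu\lambda^r - \lambda\mu^r = \lambda\mu(\lambda^{r-1}-\mu^{r-1})$ and expanding the rational expression to leading order yields
\[
\lim_{d\to\infty} \frac{\alpha_{\mathrm{w}}(r,d,cd^{-1/(r-1)})}{cd^{-1/(r-1)}} = \frac{(r-1) - (r-2)e^{-c^{r-1}}}{r - (r-1)e^{-c^{r-1}}} =: g(c).
\]
Clearing fractions and simplifying gives the key algebraic identity
\[
g(c) - \frac{r-1}{r} = \frac{e^{-c^{r-1}}}{r\bigl[r-(r-1)e^{-c^{r-1}}\bigr]} = \frac{1}{r + r^2(e^{c^{r-1}}-1)},
\]
which is precisely the integrand defining $c_{\mathrm{w},r}$, and which decays super-exponentially as $c\to\infty$ and so is integrable on $[0,\infty)$.

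Then I would split the corrected integral $\int_0^1 h_d(\lambda)\,d\lambda$, with $h_d(\lambda) := \alpha_{\mathrm{w}}(r,d,\lambda)/\lambda - (r-1)/(r(1+\lambda))$, at the cutoff $\lambda_0 = d^{-1/(r-1)}\log d$. For $\lambda > \lambda_0$, using $\rho^{r-1} \ge \lambda^{r-1}/2^{r-1}$ for $\lambda \le 1$, we have $\eta_d(\lambda) \le \exp(-(\log d)^{r-1}/2^{r-1})$, so $h_d$ is super-polynomially small there and its tail integral is $o(d^{-1/(r-1)})$. For $\lambda \le \lambda_0$, change variables to $c = d^{1/(r-1)}\lambda \in [0,\log d]$. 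On this range $(r-1)/(r(1+\lambda)) = (r-1)/r + O(d^{-1/(r-1)}\log d)$ uniformly, so $h_d(\lambda) \to g(c) - (r-1)/r$ pointwise with an integrable dominating function (of the form $e^{-c^{r-1}/2^{r-1}}$ times a polynomial in $c$ for $c \ge 1$, and a uniform bound near $c=0$). Dominated convergence then gives
\[
\int_0^{\lambda_0} h_d(\lambda)\,d\lambda = d^{-1/(r-1)}\int_0^{\log d} h_d(cd^{-1/(r-1)})\,dc \sim d^{-1/(r-1)} \int_0^\infty \frac{dc}{r + r^2(e^{c^{r-1}}-1)},
\]
and dividing by $\ln 2$ produces the claimed $c_{\mathrm{w},r}/d^{1/(r-1)}$ correction.

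The main technical obstacle will be producing the quantitative estimates needed to justify dominated convergence in the scaled integral: one must bound how quickly $\eta_d(cd^{-1/(r-1)})$ approaches $e^{-c^{r-1}}$ uniformly for $c \in [0,\log d]$, control the lower-order corrections in the Taylor expansions of $\mu^{r-1}-\lambda^{r-1}$ and of the polynomial prefactors $\mu^2\lambda^r + \lambda(r-\mu-1)\mu^r$ and $\mu\lambda^r + (r-\mu)\mu^r$ appearing in the numerator and denominator, and combine these into a single integrable majorant for $|h_d(cd^{-1/(r-1)})|$ on $[0,\infty)$.
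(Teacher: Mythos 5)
Your proposal is correct and follows essentially the same route as the paper: subtract the leading term $(r-1)/(r(1+\lambda))$, parameterize the remainder by $\lambda = c\,d^{-1/(r-1)}$, identify the pointwise limit $\bigl[r + r^2(e^{c^{r-1}}-1)\bigr]^{-1}$, and integrate. You supply more explicit justification (the cutoff at $\lambda_0 = d^{-1/(r-1)}\log d$, the tail bound $\eta_d \le \exp(-(\log d)^{r-1}/2^{r-1})$, and a dominated-convergence majorant) where the paper compresses the limit interchange into a $(1+o_d(1))$, but the underlying argument is the same.
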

\begin{proof}
Let $G(r,d,\lam) = \frac{ \alpha_{\mathrm{w}}(r,d,\lam)}{\lam} - \frac{r-1}{r\llambda}$.  That is,
\begin{align}
  G(r,d,\lam) &=
  \frac{\parens*{\lambda^r (\lambda+r) - \lambda \llambda^r} \parens*{\llambda^{r-1} - \lambda^{r-1}}^d}%
       {r \parens*{r \parens*{\llambda \lambda^r - \lambda \llambda^r} \llambda^{d(r-1)} + \lambda\parens*{\llambda \lambda^r + (r-\llambda) \llambda^r} \parens*{\llambda^{r-1} - \lambda^{r-1}}^d}}\,.
\end{align}

   We can take the asymptotics of $G(r,d,\lam)$ as $d \to \infty$ for $\lam = \Theta(d^{-1/(r-1)})$:
\begin{align}
  G(r,d,\lam) &= 
 (1+ o_d(1) )  \frac{-\lambda  \llambda^r \parens*{\llambda^{r-1}-\lambda^{r-1}}^d}%
       {r \parens*{\lambda  \parens*{\llambda \lambda^r + (r-\llambda) \llambda^r} \parens*{\llambda^{r-1} - \lambda^{r-1}}^d - \lambda r \llambda^{d(r-1) + r}}} \\
  &=(1+ o_d(1) )  \frac{1}{r \parens*{r \parens*{\llambda^{d(r-1)} \parens*{\llambda^{r-1} - \lambda^{r-1}}^{-d} - 1} +\llambda - \lambda^r \llambda^{1-r}}} \\
  &= (1+ o_d(1) ) \frac{1}{r^2 \parens*{\llambda^{d(r-1)} \parens*{\llambda^{r-1} - \lambda^{r-1}}^{-d} - 1} + r} \\
  &=(1+ o_d(1) )  \frac{1}{r+r^2 \parens*{\frac{\llambda^{d(r-1)}} {\parens*{\llambda^{r-1} - \lambda^{r-1}}^d} - 1}} \,,
\end{align}
and with the parameterization $\lam =  c  d^{-1/(r-1)}$, we obtain
\begin{align}
G(r,d,\lam) &=  \frac{1}{r + r^2 \left( e^{c^{r-1}}-1  \right) }  (1+ o_d(1) ) \, ,
\end{align}
and note that
{\small
\begin{align}
\frac{1}{\ln 2}\int_0^1 \frac{ \alpha_{\mathrm{w}}(r,d,\lam)}{\lam} \, d \lam  &= \frac{1}{\ln 2}\int_{0}^1 \frac{r-1}{r} \frac{1}{1+\lam} \, d \lam + (1+o_d(1)) \frac{1}{\ln 2} \int_0^{\infty}   \frac{1}{r + r^2 \left( e^{c^{r-1}}-1  \right) } \, dc \\
& = \frac{r-1}{r} + \frac{c_{\mathrm w,r}}{d^{1/(r-1)}}  (1+o_d(1)) 
\end{align}
}
 as desired.

\end{proof}

\begin{proof}[Proof of Proposition~\ref{propOccupancyWeak}]

We can construct a candidate optimal solution $\tilde p_{j,k}$ by putting support only on $(j,k) \in \{(d,0), (0,(r-2)d)\}$.  Solving the constraint yields 
\begin{align}
\tilde p_{d,0} 
&= \frac{\alpha_0^v - \alpha_{0,d(r-2)}^N}{\alpha_0^v - \alpha_{0,d(r-2)}^N - \alpha_d^v + \alpha_{d,0}^N}\\
&= \frac{\lambda}{\lambda + (r-1)\llambda (\alpha_{d,0}^N - \alpha_d^v)}
\end{align}
and
\begin{align}
  \frac{\tilde\alpha}{\lambda} 
  &= \frac{(r-1)\alpha_{d,0}^N - (r-2)\alpha_d^v}{\lambda + (r-1)\llambda(\alpha_{d,0}^N - \alpha_d^v)}\\
  &= \frac{1}{\llambda} \parens*{1 - \tilde p_{d,0}\frac{Z_d^- - Z_d^+}{Z_d^- + \lambda Z_d^+}}\\
  &= \frac{1}{\llambda}\parens*{1 - \frac{Z_d^- - Z_d^+}{r(Z_d^- + \lambda Z_d^+) - (r-1)\pfrac{\llambda^r - \lambda^r}{\llambda^{r-1} - \lambda^{r-1}} Z_d^+}}.
\end{align}
As a check, note that $\alpha_0^v > \alpha_{0,d(r-2)}^N \geq 0$ and $\alpha_{d,0}^N \geq \alpha_d^v \geq 0$, so $0\leq \tilde p_{d,0} \leq 1$.

The dual LP (in variables $\Lambda_c, \Lambda_p$ corresponding to the two equality constraints of the primal) is
\begin{align}
  \min \Lambda_p &\qquad \text{s.t.}\\
  \Lambda_p + (\alpha_j^v - \alpha_{j,k}^N)\Lambda_c &\geq \alpha_j^v \qquad\forall\ j,k.
\end{align}

Guided by our candidate primal optimal solution, we can find candidate dual variables by solving for equality in the $(j,k) = (d,0), (0,(r-2)d)$ constraints.  This yields $\tilde \Lam_p = \tilde \alpha$ and
\begin{align}
  \tilde \Lambda_c &= (r-1)\parens*{1 - \tfrac{\llambda}{\lambda} \tilde\Lambda_p}.
\end{align}

What remains to show is that this candidate dual solution $\tilde \Lam_p, \tilde \Lam_c$ is feasible; that is, 
\begin{align}
 \tilde \Lambda_p + (\alpha_j^v - \alpha_{j,k}^N)\tilde \Lambda_c &\geq \alpha_j^v \qquad\forall\ j,k  \,.
\end{align}
Note that $\tilde\Lambda_p = \tilde\alpha \leq \frac{\lambda}{\llambda}$ (as $\alpha_j^v \leq \frac{\lambda}{\llambda}$ for all $j$) and so $\tilde\Lambda_c \geq 0$. Thus we may assume that $k = (d-j)(r-2)$ as decreasing $k$  only decreases  $\alpha_{j,k}^N$ and makes the constraint easier to satisfy.  So we must show that for $j=0, \dots d$, and all $\lambda \in [0,1]$,
\begin{equation}
	\label{poly_ineq}
	\tilde \Lambda_p + (\alpha_j^v - \alpha_{j,(d-j)(r-2)}^N)\tilde \Lambda_c \geq \alpha_j^v   \, .
\end{equation}

Equivalently, writing $\alpha_{j}:=\alpha_{j}^{v} / \lambda$ and $s$ for $r-1$,
\begin{align}
	\frac{\tilde\Lambda_{p}}{\lambda} & \geq \frac{s \alpha_{j, k}^{N} / \lambda-(s-1) \alpha_{j}^{v} / \lambda}{1+s \mu\left(\alpha_{j, k}^{N} / \lambda-\alpha_{j}^{v} / \lambda\right)} \\
	&=\frac{\frac{1}{\mu d}\left(k+s j-s j \alpha_{j}\left(\frac{\lambda^{s}}{\mu^{s}-\lambda^{s}}\right)\right)-(s-1) \alpha_{j}}{1+\frac{1}{d}\left(k+s j-s j \alpha_{j}\left(\frac{\lambda^{s}}{\mu^{s}-\lambda^{s}}\right)\right)-s \mu \alpha_{j}} \\
	&=\left(\frac{1}{\mu}\right) \frac{k+s j-s j \alpha_{j}\left(\frac{\lambda^{s}}{\mu^{s}-\lambda^{s}}\right)-(s-1) \mu d \alpha_{j}}{d+k+s j-s j \alpha_{j}\left(\frac{\lambda^{s}}{\mu^{s}-\lambda^{s}}\right)-s \mu d \alpha_{j}} \\
	&=\frac{1}{\mu}\left(1+\frac{\mu d \alpha_{j}-d}{s d+j-s j \alpha_{j}\left(\frac{\lambda^{s}}{\mu^{s}-\lambda^{s}}\right)-s \mu d \alpha_{j}}\right) \\
	&=\frac{1}{\mu}\left(1-\frac{d\left(\mu^{s}-\lambda^{s}\right)}{\frac{j\left(\lambda^{s} s \alpha_{j}-\left(\mu^{s}-\lambda^{s}\right)\right)}{\mu \alpha_{j}-1}+s d\left(\mu^{s}-\lambda^{s}\right)}\right)=:f(j)
\end{align}
Note that the left hand side is equal to $f(d)$, so we need to show that $f(j) \leq f(d)$. This is equivalent to showing that $g(j) \leq g(d)$ for
\begin{align}
	g(j) &=d\left(\mu^{s}-\lambda^{s}\right)\left(\frac{1}{1-\mu f(j)}-s\right) \\
	&=\frac{j\left(s \alpha_{j} \lambda^{s}-\left(\mu^{s}-\lambda^{s}\right)\right)}{\mu \alpha_{j}-1} \\
	&=\frac{j\left(s Z_{j}^{+} \lambda^{s}-\left(\mu^{s}-\lambda^{s}\right)\left(Z_{j}^{-}+\lambda Z_{j}^{+}\right)\right)}{\mu Z_{j}^{+}-\left(Z_{j}^{-}+\lambda Z_{j}^{+}\right)} \\
	&=\frac{j\left(s Z_{j}^{+} \lambda^{s}+Z_{j}^{-} \lambda^{s}-\left(Z_{j+1}^{-}+\lambda Z_{j+1}^{+}\right)\right)}{Z_{j}^{+}-Z_{j}^{-}} \\
	&=j \frac{C_{j}}{B_{j}}
\end{align}
where
$B_j:=\mu^{js}-\nu^j$, 
$C_j:=\mu^{js}\nu+\lambda\gamma\nu^j$, and $\mu,\nu,\gamma$ are the auxiliary variables
\begin{align*}
	\mu &:=1+\lambda \\
	\nu &:=(1+\lambda)^{s}-\lambda^{s} \\
	\gamma&:=(1+\lambda)^{s}-\lambda^{s-1}(\lambda+s).
\end{align*} 
Now $g(d)\geq g(j)$ can be rewritten as $dB_jC_d\geq jB_dC_j,$ which in turn is equivalent to
\begin{align*}
	&(d-j)\mu^{(d+j)s}-(d-j)\lambda\gamma \nu^{d+j-1}\geq\\
	&d\mu^{sd}\nu^j-j\mu^{js}\nu^d+j\lambda\gamma\mu^{sd}\nu^{j-1}-d\lambda\gamma\mu^{js}\nu^{d-1}
\end{align*}
or 
\begin{align}
	(d-j)\mu^{(d+j)s}\geq & \mu^{js}\nu^{j-1}\Big(\nu(d\mu^{(d-j)s}-j\nu^{d-j})-\lambda\gamma(d\nu^{d-j}-j\mu^{(d-j)s})\Big) \\
	&+(d-j)\lambda\gamma\nu^{d+j-1} \label{prop11_1} 
\end{align}
Notice that it suffices to show $g$ is increasing. Thus, we may assume $d=j+1$ and reduce \eqref{prop11_1} to
$$\mu^{(2j+1)s}\geq \mu^{js}\nu^{j-1}\Big(\nu\big((j+1)\mu^{s}-j\nu\big)-\lambda\gamma\big((j+1)\nu-j\mu^{s}\big)\Big)+\lambda\gamma\nu^{2j}.$$
Set $\rho:=\mu^{s}$ and $\sigma:=\lambda^{s}$, thus $\rho-\nu=\sigma$.  The desired inequality can be now written as
\begin{equation}\label{prop11_2}
	\rho^{2j+1}\geq \rho^{j}\nu^{j-1}\Big(\nu\big((j+1)\rho-j\nu\big)-\lambda\gamma\big((j+1)\nu-j\rho\big)\Big)+\lambda\gamma\nu^{2j}.
\end{equation}
Substituting
$$(j+1)\rho-j\nu= (j+1)\rho-j(\rho-\sigma)=\rho+j\sigma$$
and
$$(j+1)\nu-j\rho=(j+1)(\rho-\sigma)-j\rho=\rho-(j+1)\sigma$$
into \eqref{prop11_2} we get the equivalent
\begin{equation}\label{prop11_3}
	\rho^{2j+1}\geq \rho^{j}\nu^{j-1}\Big(\nu\big(\rho+j\sigma\big)-\lambda\gamma\big(\rho-(j+1)\sigma\big)\Big)+\lambda\gamma\nu^{2j}.
\end{equation}
We will prove \eqref{prop11_3} by induction on $j$.
For $j=1$, we substitute $\nu=\rho-\sigma$ and, after expanding and canceling out, we are left with
$$\rho\sigma^2\geq \lambda\gamma\sigma^{2},$$
which holds, since $\rho\geq \gamma\geq\lambda\gamma$.
Now assume \eqref{prop11_3} holds for some $j$. Using it for $\rho^{2j+3}=\rho^{2}\cdot\rho^{2j+1}$, the statement for $j+1$ reduces to
\begin{align*}
	&\rho^{2}\Bigg(\rho^{j}\nu^{j-1}\Big(\nu\big(\rho+j\sigma\big)-\lambda\gamma\big(\rho-(j+1)\sigma\big)\Big)+\lambda\nu^{2j}\gamma\Bigg)\geq\\
	&\rho^{j+1}\nu^{j}\Big(\nu\big(\rho+(j+1)\sigma\big)-\lambda\gamma\big(\rho-(j+2)\sigma\big)\Big)+\lambda\nu^{2j+2}\gamma.
\end{align*}
By writing $(k+1)\sigma=k\sigma+\sigma$ for $k=j,j+1$ and moving everything to the left hand side we get	
$$\rho^{j+1}\nu^{j-1}\Bigg(\nu\big(\rho+j\sigma\big)(\rho-\nu)-\lambda\gamma\big(\rho-(j+1)\sigma\big)(\rho-\nu)-\nu\sigma(\nu+\lambda\gamma)\Bigg)+\lambda\nu^{2j}\gamma(\rho^{2}-\nu^2)\geq0.$$
Since $\rho-\nu=\sigma,$ we can cancel $\sigma\nu^{j-1}$, and we are left with proving
$$
\rho^{j+1}\Bigg(\nu\big(\rho+j\sigma\big)-\lambda\gamma\big(\rho-(j+1)\sigma\big)-\nu(\nu+\lambda\gamma)\Bigg)+\lambda\nu^{j+1}\gamma(\rho+\nu)\geq0,
$$
which is equivalent to	
$$\rho^{j+1}\nu(\rho-\nu)+j\sigma\nu\rho^{j+1}+(j+1)\lambda\gamma\sigma\rho^{j+1}\geq\lambda\gamma\rho(\rho^{j+1}-\nu^{j+1})+\lambda\gamma\nu(\rho^{j+1}-\nu^{j+1}),$$
or
$$(j+1)\rho^{j+1}\sigma(\nu+\lambda\gamma)\geq \lambda\gamma(\rho^{j+1}-\nu^{j+1})(\rho+\nu).$$
Now observe that
\begin{align*}
	\rho^{j+1}-\nu^{j+1}&=\sigma\sum_{k=0}^{j}\rho^{k}\nu^{j-k} \leq (j+1)\sigma\rho^{j}
\end{align*}
So it suffices to prove
$$\rho(\nu+\lambda\gamma)\geq\lambda\gamma(\rho+\nu),$$
i.e. $\rho\geq \lambda\gamma,$ which is obviously true. 
\end{proof}

\section{Strong independent sets in $3$-uniform hypergraphs}
\label{secStronIS}

Now we turn to strong independent sets and prove Theorem~\ref{thm:3unif}.    Using another linear programming relaxation, we will prove the following upper bound on the occupancy fraction.  

\begin{prop}\label{thmoccupancy3}
Let
\begin{align}
\label{eqAlpaFormula}
\frac{\alpha_{\mathrm s}(r,d,\lambda)}{\lambda} &= \frac{\sum_{t=1}^{r} \binom{r-1}{t-1}\prod_{i=t}^{r-1}((1+i\lambda)^{d-1}-1)}{\sum_{t=1}^{r} ((1+(t-1)\lambda)^d + \lambda)\binom{r-1}{t-1}\prod_{i=t}^{r-1}((1+i\lambda)^{d-1}-1)} \,.
\end{align}
Then for any $d$-regular, $3$-uniform, linear, cross-edge free hypergraph $G$, and for any $\lambda > 0$,
\[ \overline \alpha_G(\lambda) \leq \alpha_{\mathrm s}(3,d,\lambda) \, .\]
\end{prop}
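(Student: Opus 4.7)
Following the occupancy method of Section~\ref{secHardCore} and the template of the proof of Theorem~\ref{thm:weak}, I plan to proceed as follows. First, for a 3-uniform, $d$-regular, linear, cross-edge-free hypergraph $G$ and a vertex $v$, the $d$ edges through $v$ partition $N(v)$ into $d$ pairs, and by cross-edge freeness no other edge of $G$ lies inside $\{v\} \cup N(v)$. Given a conditioning $J = I \setminus N(v)$, each $u \in N(v)$ is either \emph{externally blocked} (by some external edge through $u$ hitting $J$, forcing $u \notin I$) or \emph{free}, so the conditional law of $I \cap (\{v\} \cup N(v))$ is determined by the triple $(j_0, j_1, j_2)$ with $j_0 + j_1 + j_2 = d$, where $j_t$ counts the $v$-edges with exactly $t$ free non-$v$ vertices. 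A direct computation gives $Z_{(j_0, j_1, j_2)} = \lambda + (1+\lambda)^{j_1}(1+2\lambda)^{j_2}$, $\alpha^v = \lambda/Z$, and $\alpha^N = \tfrac{1 - \alpha^v}{2d}\bigl(j_1 \tfrac{\lambda}{1+\lambda} + j_2 \tfrac{2\lambda}{1+2\lambda}\bigr)$.

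Second, I pose the LP: maximize $\sum_C p_C \alpha^v_C$ subject to the occupancy equality $\sum_C p_C(\alpha^v_C - \alpha^N_C) = 0$ and normalization (using that $|N(v)| = 2d$). The structure of the claimed bound---a ratio of sums indexed by $t = 1, 2, 3$ whose denominator terms $(1 + (t-1)\lambda)^d + \lambda$ are exactly the partition functions at the uniform configurations $(d, 0, 0), (0, d, 0), (0, 0, d)$---strongly signals that the extremal distribution lives on these three corners. Since the basic LP has only two equality constraints and so generically supports at most a two-point optimum, I plan to augment it with auxiliary consistency constraints (for example, stratifying the vertex-versus-neighborhood double-counting identity by edge type $t \in \{0, 1, 2\}$) so that the strengthened dual acquires the extra degrees of freedom needed to certify the three-point extremum. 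Candidate dual multipliers are then produced by forcing equality of the dual constraint at these three uniform configurations.

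Third, I verify dual feasibility for every lattice point $(j_0, j_1, j_2)$ with $j_0 + j_1 + j_2 = d$. In contrast to the weak case---where the proof of Theorem~\ref{thm:weak} reduces to a one-parameter inequality monotone in $j$ via the auxiliary polynomials $B_j, C_j$---the strong case yields a genuinely two-parameter polynomial inequality on the simplex $\{(j_0, j_1, j_2) : \sum j_t = d\}$. This is the main obstacle. My plan is first to show the dual slack is monotone under the elementary moves $(j_0, j_1) \mapsto (j_0 - 1, j_1 + 1)$ and $(j_1, j_2) \mapsto (j_1 - 1, j_2 + 1)$, reducing feasibility to the three uniform corners at which equality holds by construction. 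If one-step monotonicity fails to hold globally, I would fall back to an inductive argument on $d$ with suitable analogs of $B_j, C_j$, exploiting log-concavity of $(1 + t\lambda)^j$ and the identities $(1+2\lambda) - (1+\lambda)^2 = -\lambda^2$ and the like. Identifying these auxiliary quantities and the algebraic identities that sustain the induction---so that the final step reduces, after substantial algebra, to a manifestly true inequality in $\lambda, 1+\lambda, 1+2\lambda$---will be the technical heart of the proof.
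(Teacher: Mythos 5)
Your local-configuration setup is correct: parameterizing $C_v$ by $(j_0,j_1,j_2)$ with partition function $\lambda + (1+\lambda)^{j_1}(1+2\lambda)^{j_2}$ matches the paper's $(d_1,d_2,d_3)$ parameterization (with a shift of one in the indexing), and you correctly identify that a basic two-constraint LP supports only a two-point optimum while the claimed bound is a three-point convex combination. The key idea of adding extra, stratified constraints is indeed what the paper does. However, the specific constraints the paper uses are not the standard occupancy identity (vertex sum $=$ neighborhood sum) stratified by edge type; instead the paper introduces a new double-counting based on \emph{availability}: letting $A$ be the set of vertices that are uncovered and unoccupied, it picks a random edge $e$ through $v$, sets $t(e)=|e\cap A|$, and exploits $\Pr[v\in A\mid t(e)=t]=t/r$ to derive, for each $t\in\{1,\dots,r-1\}$, a constraint of the form $\sum_C p(C)\bigl(t\,\Pr[t(e)=t\mid C_v=C]-r\,\eta_t(C)/P_C(\lambda)\bigr)=0$. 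This is a genuinely different family of identities from the one you sketch, and it is what gives the dual the right number of degrees of freedom.

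The more serious gap is in your dual-feasibility plan. Your primary strategy, one-step monotonicity of the dual slack under the moves $(j_0,j_1)\mapsto(j_0-1,j_1+1)$ and $(j_1,j_2)\mapsto(j_1-1,j_2+1)$, cannot hold as stated: by construction the slack vanishes at \emph{all three} corners $(d,0,0)$, $(0,d,0)$, $(0,0,d)$, so it cannot be monotone along segments joining them. The paper instead proves nonnegativity of the slack $S(\eta)$ on the simplex by ruling out interior local minima: at any non-vertex local minimum, the first- and second-order conditions $\hat S'(0)=0$, $\hat S''(0)\geq 0$ along a line $\eta+xu$ (using $\hat Q(x)=Q_{\eta+xu}$, $\hat Q'=\hat Q\ln Q_u$) force $\sum_t c_t u_t(Q_t+Q_\eta)\geq 0$, and then the explicit inequality $c_i(Q_i+Q_j)<2c_jQ_j$ for $i>j$ (together with $c_1>c_2>c_3>0$) produces a contradiction. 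These final inequalities are checked term-by-term as polynomials in $\lambda$ and are special to $r=3$: the paper explicitly notes the analogous second-derivative criterion fails for $r\geq 4$, which is the real reason the proposition is stated only for $r=3$. Your proposal neither anticipates this local-minimum argument nor identifies why the method breaks for larger $r$; the fallback induction on $d$ with $B_j,C_j$-style auxiliaries is what works in the one-parameter weak case but is not what the paper does here, and it is far from clear it could be carried out on the two-dimensional simplex.
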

Note that we define $\alpha_{\mathrm s}(r,d,\lambda)$ for general $r$, while only considering $r=3$ in the proposition.  We believe the inequality holds for $r\le 6$, and leave proving this as an open problem.  

Once we  prove Proposition~\ref{thmoccupancy3},  Theorem~\ref{thm:3unif} will follow via integration from the next claim. 
\begin{claim}
\begin{align}
\frac{1}{\ln 2} \int_0^1 \frac{\alpha_{\mathrm s}(3,d,\lam)}{\lam} \, d \lam = \frac{1}{3} + \frac{c_{\mathrm s, 3}}{d} (1+o_d(1))
\end{align}
where 
\[c_{\mathrm s, 3}= \frac{1}{\ln 2} \int_0^ \infty \frac{ 3e^{2c}-1  }{ 3(1-3e^c + 3 e^{3c}    )  }  \, dc   \approx 0.603772 \, .\]
\end{claim}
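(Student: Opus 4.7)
The plan is to peel off a leading term that integrates exactly to $(\ln 2)/3$ and extract the $1/d$ correction by a Laplace-type change of variables. Write $A := (1+\lambda)^{d-1}$ and $B := (1+2\lambda)^{d-1}$. Specializing~\eqref{eqAlpaFormula} to $r=3$ and collecting the three terms over $t \in \{1,2,3\}$ yields
$$\frac{\alpha_{\mathrm s}(3,d,\lambda)}{\lambda} \;=\; \frac{AB - A + B}{3(1+\lambda)A(B-1) + 3\lambda B + 1}.$$
A short direct computation (using $3(1+\lambda)(AB-A+B) - [3(1+\lambda)A(B-1) + 3\lambda B + 1] = 3B - 1$) gives the key identity
$$g_d(\lambda) \;:=\; \frac{\alpha_{\mathrm s}(3,d,\lambda)}{\lambda} - \frac{1}{3(1+\lambda)} \;=\; \frac{3B-1}{3(1+\lambda)\bigl[3(1+\lambda)A(B-1) + 3\lambda B + 1\bigr]}.$$
Since $\int_0^1 d\lambda / (3(1+\lambda)) = (\ln 2)/3$, the claim reduces to showing $\int_0^1 g_d(\lambda)\,d\lambda = (c_{\mathrm s,3}\ln 2 / d)(1+o_d(1))$.

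To capture the $1/d$ scaling, substitute $u = d\lambda$, turning the remainder into $\tfrac{1}{d}\int_0^d g_d(u/d)\,du$. For each fixed $u \geq 0$, $(1+u/d)^{d-1} \to e^u$ and $(1+2u/d)^{d-1} \to e^{2u}$ as $d \to \infty$, so $g_d(u/d)$ converges pointwise to
$$g_\infty(u) \;:=\; \frac{3e^{2u}-1}{3(3e^{3u} - 3e^u + 1)},$$
the integrand in the definition of $c_{\mathrm s,3}$. The plan is to pass to the limit by dominated convergence, whereupon the limit becomes $\int_0^\infty g_\infty(u)\,du$.

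The main technical step is constructing a $d$-independent, integrable envelope. First, $g_d(\lambda) \leq 2/3$ on all of $[0,1]$: clearing denominators reduces this to $6(1+\lambda)^2 A(B-1) + 6\lambda(1+\lambda)B + 2(1+\lambda) \geq 3B-1$, and the first term alone is $\geq 6(B-1)$ since $A,(1+\lambda)^2 \geq 1$. Second, for $u \geq 1$ and $d \geq 2$, Bernoulli's inequality gives $B-1 \geq 2u(d-1)/d \geq 1$, hence $B/(B-1) \leq 2$; and applying $\ln(1+x) \geq x - x^2/2$ on $u \leq d$ yields $\ln A \geq u(d-1)/(2d) \geq u/4$. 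Substituting these into $g_d(\lambda) \leq B/[3(1+\lambda)^2 A(B-1)]$ gives $g_d(u/d) \leq (2/3)e^{-u/4}$. Hence $h(u) := (2/3)\mathbf{1}_{u \leq 1} + (2/3)e^{-u/4}\mathbf{1}_{u>1}$ is an integrable dominator for all $d \geq 2$. Since $g_\infty(u) \sim \tfrac{1}{3}e^{-u}$ at infinity, the limiting integral converges, and dominated convergence closes the argument, identifying $c_{\mathrm s,3} = (1/\ln 2)\int_0^\infty g_\infty(u)\,du$. The algebraic simplification of the first paragraph is the main obstacle: the cancellation that produces the miraculously compact numerator $3B-1$ is what makes both the main term $1/(3(1+\lambda))$ and the correct limiting integrand visible at all. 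Once that identity is in hand, everything else is routine asymptotic analysis.
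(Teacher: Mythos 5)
Your proof is correct and follows essentially the same route as the paper's: both peel off the exactly-integrable term $\frac{1}{3(1+\lambda)}$ (your identity $g_d(\lambda) = \frac{3B-1}{3(1+\lambda)D}$ is the same, in different variables, as the paper's formula for $G_3(d,\lambda)$), both reparametrize by $\lambda = c/d$, and both identify the limiting integrand $\frac{3e^{2c}-1}{3(1-3e^c+3e^{3c})}$. The difference is rigor: the paper writes $G_3(d, c/d) = g_\infty(c)\,(1 + O(1/d))$ pointwise and then integrates without explicitly justifying the interchange of limit and integral, whereas you supply an explicit dominated-convergence argument via the dominator $h(u)=\tfrac23\mathbf 1_{u\le 1}+\tfrac23 e^{-u/4}\mathbf 1_{u>1}$. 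That is a genuine improvement in care, not a different method.

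One small slip in your domination argument: in proving $g_d(\lambda)\le 2/3$, you reduce to $6(1+\lambda)^2A(B-1)+6\lambda(1+\lambda)B+2(1+\lambda)\ge 3B-1$ and then observe that the first term alone is $\ge 6(B-1)$. By itself that does not suffice, since $6(B-1)\ge 3B-1$ fails whenever $B<5/3$ (in particular near $\lambda=0$, where the bound is tight). You also need the constant term $2(1+\lambda)\ge 2$; then $6(B-1)+2=6B-4\ge 3B-1$ because $B\ge 1$. With that line added, your proof is complete.
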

\begin{proof}
The proof is similar to that of Claim~\ref{claimWeakAsymp}.  Let 
{\small
\begin{align}
G_3(d,\lam) &=  \frac{\alpha_{\mathrm s}(3,d,\lam)}{\lam} - \frac{1}{3} \frac{1}{1+\lam} \\
&= \frac{ 3 (1 + 2 \lam)^d-1 - 2 \lam }{3 (1 + \lam) \left [1 - 3 (1 + \lam)^d + 3 (1 + \lam)^d (1 + 2 \lam)^d + 
   \lam (2 - 6 (1 + \lam)^d + 3 (1 + 2 \lam)^d) \right  ]} \, .
\end{align}
}

Paramterizing by $\lam = c/d$, we have
\begin{align}
G_3(d,c/d)  &=  \frac{\alpha_{\mathrm s}(3,d,\lam)}{\lam} - \frac{1}{3} \frac{1}{1+\lam} \\
&= \frac{ 3e^{2c} -1 +O(1/d)  }{3 (1 + c/d) \left [1 - 3 e^c +3e^{3c}  +O(1/d) 
   \right  ]} \\
&= \frac{ 1-3e^{2c}  }{ 3(3e^c - 3 e^{3c} -1   )  }(1 +O(1/d))
\end{align}
as $d \to \infty$,  and so we have 
\begin{align}
\frac{1}{\ln 2} \int_0^1 \frac{\alpha_{\mathrm s}(3,d,\lam)}{\lam}\, d \lam &= \frac{1}{\ln 2}\int_0^1 \frac{1}{3} \frac{1}{1+\lam} + G_3(d,\lam) \, d\lam \\
&=  \frac{1}{\ln 2}\int_0^1 \frac{1}{3} \frac{1}{1+\lam} \, d\lam +  \frac{1}{\ln 2} \int_0^d  \frac{G_3(d,\lam)}{d}   \, dc \\
&= \frac{1}{3} + \frac{c_{\mathrm s, 3}}{d}( 1 + O(1/d)) \, .
\end{align}

\end{proof}

Now we prove Proposition~\ref{thmoccupancy3}.

We can get a local estimate of $\alpha_G$ by examining (along with the independent set $I$) a uniformly random vertex $v$ and a random edge $e$ containing $v$, so that $\alpha_G = \Pr[v\in I]$. Note that because $G$ is regular and uniform this is equivalent to picking $e$ uniformly and then picking $v$ uniformly from $e$.

Say a vertex $x$ is \emph{covered} by a vertex $y$ if $y\in I$ and $x\sim y$. Note that any $x\in I$ is uncovered. Call an uncovered vertex which is also unoccupied \emph{available}, and let $A$ be the set of available vertices. Let $N(v)$ denote the neighborhood of $v$, and let $\hat N(v) = N(v) \cup \{v\}$.

Call a vertex \emph{externally uncovered} if it is not covered by any vertex outside of $\hat N(v)$, and let $C_v$ be the hypergraph $G$ restricted to $v$ and its externally uncovered neighbors (keeping all partial edges through $v$, so that $v$ has still degree $d$, but $C_v$ is no longer uniform). Let $\C$ be the collection of all such possible configurations. For each $C\in \C$ write $p(C) = \Pr[C_v = C]$ for the distribution of $C_v$ and let $P_C(\lambda)$ be the partition function for the hypergraph $C$. Note that this partition function includes one configuration with $v\in I$ (of weight $\lambda$). 

We are interested in maximizing
\[\alpha_G = \sum_{C\in \C} p(C) \Pr[v\in I \mid C_v = C] = \sum_{C\in \C} p(C) \frac{\lambda}{P_C(\lambda)}\]
over all hypergraphs $G$. However, the only terms in this formula which depend on the original hypergraph $G$ at all are the probabilities $p(C)$. Thus it will be useful to know more about which distributions $p$ can actually arise from hypergraphs in this way.

Let $t(e) \coloneqq \size{e\cap A}$ be the number of available vertices in $e$. We also know that
\begin{align}
    \Pr[v \in A \mid t(e) = t] = t/r
\end{align}
for each $0\leq t\leq r$. Conditioning on $C_v = C$, we have
\begin{align}
    \frac{t}{r}\,\Pr[t(e) = t]
    &= \Pr[v\in A,\ t(e) = t]\\
    &= \sum_C p(C) \Pr[v\in A,\ t(e) = t \mid C_v = C]\\
    &= \sum_C p(C) \Pr[v\in A \mid C_v = C] \Pr[t(e) = t \mid v\in A,\ C_v = C].
\end{align}
We can calculate $\Pr[v\in A \mid C_v = C] = 1/P_C(\lambda)$ (since only the empty independent set on $C$ leaves $v$ available) and 
\[\Pr[t(e) = t \mid v\in A,\ C_v = C] = \frac{d_t(C)}{d} \eqdef \eta_t(C),\]
where $d_t(C)$ is the number of size-$t$ edges containing $v$ in $C$ (since whenever $v$ is available $t(e) = \size{e}$ and all $d$ edges containing $v$ are equally likely). Thus the probabilities $p(C)$ must satisfy
\begin{align}
    \sum_C p(C) \frac{\eta_t(C)}{P_C(\lambda)} 
    &= \frac{t}{r} \Pr[t(e) = t]
    = \frac{t}{r} \sum_C p(C) \Pr[t(e) = t \mid C_v = C]
\end{align}
giving linear constraints
\begin{align}
    \sum_C p(C) \left(t\,\Pr[t(e) = t \mid C_v = C] - \frac{r\,\eta_t(C)}{P_C(\lambda)} \right) = 0 \qquad \forall\, 0\leq t \leq r.
\end{align}
When $t=r$ the constraint holds for any choice of $p(C)$, since $t(e) = r$ precisely when $v$ is available and we pick an edge $e$ of size $r$ in $C$. It is also trivial for $t=0$, since $\eta_t = 0$ (every edge containing $v$ has size at least $1$).

These linear constraints (along with the constraint that $p$ should be a probability distribution over neighborhood configurations) give an LP relaxation for the problem of maximizing the occupancy fraction over all $d$-regular, $r$-uniform linear hypergraphs $G$, and the optimal probability distribution will give an upper bound on the occupancy fraction of such a graph---if we can solve the LP.

It remains to calculate $\Pr[t(e) = t \mid C_v = C]$, which can be quite complicated. However, the computation is vastly simplified by assuming that the hypergraph is cross-edge free. 
The possible neighborhood configurations $C$ in a cross-edge free hypergraph are completely parameterized by the number of edges $d_t(C)$ of each size $t$, as these are the only nontrivial edges in $C_v$. 
For such a neighborhood configuration $C$,
\[P_C(\lambda) = \lambda + \prod_{s=1}^r (1 + (s-1)\lambda)^{d_s(C)}.\]
For $t\neq 0$, one can obtain $t(e) = t$ given $C$ either by picking $e$ to be an edge of size $t$ and taking the empty independent set or by picking an edge of size $t+1$ and covering $v$ by one or more vertices outside that edge (the edge itself must be unoccupied, of course). That is, for $1\leq t \leq r$
\begin{align}
    \Pr[t(e) = t \mid C_v = C] = \frac{\eta_t(C) + \eta_{t+1}(C) (P_{C_t}(\lambda) - 1)}{P_C(\lambda)},
\end{align}
where $P_{C_t}(\lambda)$ is the partition function for $C$ with an edge of size $t+1$ and $v$ removed (this is just a collection of disjoint edges). In particular,
\begin{align}
    P_{C_t}(\lambda) &= \frac{P_C(\lambda) - \lambda}{1+t\lambda}.
\end{align}

Finally, we can write a linear program relaxation of our problem with variables $p(C)$:
\begin{align}\label{eqn:primal}
  \frac{\alpha^*}{\lambda} = \max \sum_{C} p(C) \frac{1}{P_C(\lambda)} \qquad \text{subject to}\\
  \sum_C \frac{p(C)}{P_C(\lambda)}\left(\eta_t(C) + \eta_{t+1}(C) \left(\frac{P_C - \lambda}{1 + t\lambda} - 1\right) - \frac{r\,\eta_t}{t}\right) &= 0 \qquad \forall\,1\leq t\leq r-1\\
  \sum_C p(C) &= 1\\
  p(C) &\geq 0 \qquad \forall C\in \C
\end{align}

\begin{remark}
  This LP relaxation generalizes both the relaxation for independent sets and that for matchings used in \cite{davies2015}, which correspond to the cases $r=2$ and $d=2$, respectively.
\end{remark}

We will use LP duality to show that the optimizer $\popt$ of this relaxation is supported on the neighborhoods $I_t$ with with $\eta_t(I_t) = 1$ (so that all of the edges in $I_t$ have size $t$). There is in fact a unique feasible solution with this support, which is realized in the case $r=2$ by $K_{d,d}$ and in the case $d=2$ by the $r\times r$ grid $K_{r,r}^T$. For $d,r>2$ the optimal solution for the relaxation does not seem likely to be feasible for the unrelaxed problem (i.e. cannot be realized by a hypergraph) and so the relaxation probably does not provide a tight bound. 

If we enforce support only on configurations $I_s$ the only nonzero terms in the primal constraint for $t$ are those with $C \in\set{I_t, I_{t+1}}$. Writing $q(s) = \popt(I_s)/P_{I_s}$, the constraint then becomes
\[q(t)(1 - \tfrac{r}{t}) + q(t+1)((1+t\lambda)^{d-1} - 1) = 0 \qquad \forall 1 \leq t \leq r-1.\]
This is effectively a recursion in $q(t)$, along with the constraint that
\[\sum_{t=1}^r \popt(I_t) = \sum_{t=1}^r q(t) P_{I_t} = 1.\]
Writing
\begin{align}
  v(t) &\defeq \frac{q(t)}{q(r)} = \prod_{i=t}^{r-1} \frac{q(i)}{q(i+1)}\\
  &= \prod_{i=t}^{r-1} \parens[\Big]{\frac{i}{r-i}}((1+i\lambda)^{d-1} - 1) = \binom{r-1}{t-1} \prod_{i=t}^{r-1}((1+i\lambda)^{d-1} - 1)
\end{align}
and $Z = \sum_{t=1}^r P_{I_t}(\lambda) v(t)$, the proposed solution to the primal is
\begin{align}
  q(t) &= \frac{v(t)}{Z}\\
  \popt(I_t) &= P_{I_t}(\lambda) q(t) = \frac{P_{I_t}(\lambda) v(t)}{Z}.
\end{align}
The objective function evaluated at this solution is
\begin{align}\label{eqn:opt}
  \frac{\alpha^*}{\lambda} &= \sum_{t=1}^r \frac{p(I_t)}{P_{I_t}} = \sum_{t=1}^r q(t)
    = \frac{\sum_t v(t)}{Z} \, .
\end{align}

The LP dual to \eqref{eqn:primal} is (in variables $\Lambda$ and $\Lambda_t$, $1\leq t \leq r-1$)
{\small
\begin{align}\label{eqn:dual-orig}
    \frac{\alpha^*}{\lambda} = \min \Lambda \qquad \text{subject to }\qquad\qquad\qquad\\
    \Lambda P_C(\lambda) + \sum_{t=1}^{r-1} \Lambda_t \left(\eta_{t+1}(C) \left(\frac{P_C(\lambda)-\lambda}{1+t\lambda} - 1\right) - \eta_t(C) \left(\frac{r}{t}-1\right) \right) &\geq 1 \quad\forall C\in \C
\end{align}
}

To show that the primal optimum is supported on the configurations $I_s$, we show that there is a feasible solution to the dual for which the corresponding constraints are tight. We can solve for candidate values $\Lopt$ and $\Lopt_t$ by setting these $r$ constraints to equality. To simplify notation, we will write 
\[Q_t \coloneqq P_{I_t}(\lambda) - \lambda = (1 + (t-1)\lambda)^d.\]

Since for $I_s$ only the $t=s$ and $t=s-1$ terms in the sum are nonzero, the corresponding constraint becomes
\begin{align}\label{eqn:dual-recursion}
    \Lopt P_{I_s} + \Lopt_{s-1} \left(\frac{Q_s}{1 + (s-1)\lambda}-1\right) - \Lopt_s \left(\frac{r}{s} - 1\right) = 1,
\end{align}
where we take the convention that $\Lopt_s = 0$ whenever $s \leq 0$ or $s \geq r$. This gives a system of linear equations for the dual variables which clearly has a unique solution. 

We can rewrite this (for $0\leq s < r$) as
\begin{align}
  \Lopt_s = \pfrac{s}{r-s} \left[\Lopt_{s-1}\left(\frac{Q_t}{1-(s-1)\lambda} - 1\right) + \Lopt(Q_s + \lambda) - 1\right].
\end{align}

\begin{claim}
  The solution to the recurrence $a_t = f_t a_{t-1} + g_t$ with $f_t \neq 0$ is
  \[a_t = \left(\prod_{k=1}^t f_k\right)\left(a_0 + \sum_{m=1}^t \frac{g_m}{\prod_{k=1}^m f_k}\right).\]
\end{claim}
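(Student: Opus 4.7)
The plan is to prove this standard first-order linear recurrence formula by straightforward induction on $t$, after verifying the base case.

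First I would check the base case $t=0$: both sides equal $a_0$, since by the usual convention an empty product is $1$ and an empty sum is $0$. For the inductive step, assume the formula holds for $t-1$, that is,
\begin{align}
a_{t-1} = \left(\prod_{k=1}^{t-1} f_k\right)\left(a_0 + \sum_{m=1}^{t-1} \frac{g_m}{\prod_{k=1}^m f_k}\right).
\end{align}
Then I would apply the recurrence $a_t = f_t a_{t-1} + g_t$, factor $f_t$ into the product (turning $\prod_{k=1}^{t-1}f_k$ into $\prod_{k=1}^t f_k$), and rewrite the extra $g_t$ term as $\left(\prod_{k=1}^t f_k\right)\cdot g_t/\prod_{k=1}^t f_k$ so that it joins the sum as its $m=t$ summand. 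This yields exactly the claimed expression for $a_t$.

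Alternatively, and perhaps more illuminating, I would present a non-inductive derivation via summation factor / telescoping. Dividing the recurrence by $F_t \coloneqq \prod_{k=1}^t f_k$ (which is nonzero by assumption) gives
\begin{align}
\frac{a_t}{F_t} - \frac{a_{t-1}}{F_{t-1}} = \frac{g_t}{F_t}.
\end{align}
Summing this telescoping identity from $k=1$ to $t$ yields $a_t/F_t - a_0 = \sum_{m=1}^t g_m/F_m$, and multiplying through by $F_t$ recovers the claimed closed form. There is no real obstacle here; the only thing to be slightly careful about is bookkeeping the empty product/sum conventions in the base case and making sure the division by $F_t$ is justified, both of which follow from the hypothesis $f_t \neq 0$ for all $t \geq 1$.
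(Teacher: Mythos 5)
Your proposal is correct. The paper states this claim without proof, treating it as a standard closed-form solution for a first-order linear recurrence, so there is no paper argument to compare against. Both of your routes—the direct induction (factoring $f_t$ into the product and absorbing $g_t$ as the $m=t$ summand) and the summation-factor telescoping after dividing by $F_t=\prod_{k=1}^t f_k$—are valid, elementary, and standard; the hypothesis $f_t\neq 0$ is exactly what is needed to justify the division, and the empty product/sum conventions handle the base case $t=0$. Either write-up would be an acceptable filling-in of the paper's omitted proof.
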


We can use this to give an explicit formula for the $\Lopt_t$s, using
\[a_t = \Lopt_t,\qquad
 f_t = \pfrac{t}{r-t}\left(\frac{Q_t}{1-(t-1)\lambda} - 1\right),\qquad
 g_t = \pfrac{t}{r-t}\Lopt(Q_t + \lambda).
\]
The only hitch here is that by this definition we have $f_1 = 0$. However, since $f_1$ is only ever used to multiply by $\Lambda_0 = 0$ we can actually set it to whatever we like. In this case it is easiest to set $f_1 = 1$. Then the formula is
\[\Lopt_t = \left(\prod_{k=2}^t f_k\right)\left(\sum_{m=1}^t \frac{g_m}{\prod_{k=2}^m f_k}\right).\]
Furthermore,
\[\prod_{k=2}^t f_k = \frac{\prod_{k=1}^{t-1} ((1+k\lambda)^{d-1}-1)}{\binom{r-1}{t}} = \frac{t}{r-t}\pfrac{v(1)}{v(t)}.\]
Plugging this in above gives (for $1\leq t < r$)
\begin{align}\label{eqn:Lambdas}
  \Lopt_t &= \frac{t}{r-t} \sum_{s=1}^t \frac{v(s)}{v(t)}(\Lopt P_{I_s}(\lambda) - 1)\\
  &= \frac{1}{Z} \frac{t}{r-t}\sum_{s=1}^t \frac{v(s)}{v(t)} \sum_{i=1}^r v(i) (Q_s - Q_i).
\end{align}

This formula of course fails for $t = r$ (because $f_r$ is undefined), but plugging $\Lopt_r = 0$ into \eqref{eqn:dual-recursion} for $s=r$ allows us to solve for $\Lopt$ and verify that it is equal to $\alpha^*/\lambda$ from the primal solution, as expected from complementary slackness.

We must show that the setting of the dual variables $\Lopt$ and $\Lopt_t$ is dual-feasible. In particular, for every neighborhood configuration $C$, we must show
{\small
\begin{align}
    \Lopt P_C(\lambda) + \sum_t \Lopt_t \eta_{t+1}(C) \frac{P_C(\lambda) - \lambda}{1+t\lambda}
    &\geq 1 + \sum_t \Lopt_t \eta_{t+1}(C) + \sum_t \Lopt_t \eta_t(C)\left(\frac{r}{t} - 1\right)\\
    &= 1 + \sum_t \eta_t(C) \left[\Lopt_{t-1} + \Lopt_t \left(\frac{r}{t} - 1\right)\right].
\end{align}
}

Recalling the notation $Q_t = P_{I_t}(\lambda) - \lambda$, we will also slightly abuse this notation by writing $Q_\eta = P_{C}(\lambda) - \lambda = \prod_t Q_t^{\eta_t}$ (where $\eta = (\eta_t(C))_{t=1}^r$; we may also use this second formula to define $Q_\eta$ for \emph{any} $\eta\in \R^r$). Substituting
	\[\Lopt_{t-1} + \Lopt_t \left(\frac{r}{t} - 1\right) = \Lopt P_{I_t}(\lambda) + \Lopt_{t-1} \frac{Q_t}{1 + (t-1)\lambda} - 1\]
	from \eqref{eqn:dual-recursion}, the constraint for $C$ becomes
	$$ \Lopt P_C(\lambda) + \sum_t \eta_{t+1} \frac{\Lopt_t Q_\eta}{1+t\lambda}\geq 1 + \sum_t \eta_t \left[\Lopt P_{I_t}(\lambda) + \frac{\Lopt_{t-1} Q_t}{1 + (t-1)\lambda} - 1\right]$$
	or equivalently
	$$\sum_t \eta_t \left[\Lopt P_C(\lambda) + \frac{\Lopt_{t-1} Q_\eta}{1+(t-1)\lambda}\right]\geq \sum_t \eta_t \left[\Lopt P_{I_t}(\lambda) + \frac{\Lopt_{t-1} Q_t}{1 + (t-1)\lambda}\right]$$
	By substituting $P_C(\lambda)=Q_\eta+\lambda$ and $P_{I_t}(\lambda)=Q_t+\lambda$ in the left and right hand side respectively, it reduces to
	$$\sum_t \eta_t Q_\eta \left[\Lopt + \frac{\Lopt_{t-1}}{1 + (t-1)\lambda}\right]\geq \sum_t \eta_t Q_t \left[\Lopt + \frac{\Lopt_{t-1}}{1 + (t-1)\lambda}\right].$$
Finally, we can write the constraints simply as a slack constraint
\begin{align}\label{eqn:slack-constraint}
  \sum_t \eta_t (Q_{\eta} - Q_t) \left(\Lopt + \frac{\Lopt_{t-1}}{1 + (t-1)\lambda}\right) \geq 0 
\end{align}
for all convex combinations $\eta$ such that $d \eta$ is integral.

Recalling the formulas \eqref{eqn:opt} and \eqref{eqn:Lambdas} for $\Lopt = \alpha^*/\lambda$ and $\Lopt_t$ and multiplying through by the nonnegative common denominator $Z$, the constraint \eqref{eqn:slack-constraint} expands to
{\small
\begin{align}
  \sum_{t=1}^{r}\frac{\eta_t (Q_\eta - Q_t)}{1+(t-1)\lambda}\sum_{i=1}^r v(i) \left(1+(t-1)\lambda + \Big(\frac{t-1}{r-t+1}\Big)\sum_{s=1}^{t-1} \frac{v(s)}{v(t-1)}(Q_s-Q_i)\right)
  \geq 0
\end{align}}

We would like to show that
\begin{align}
  S(\eta) \defeq \sum_t \eta_t c_t (Q_{\eta} - Q_t) \geq 0 
\end{align}
for every convex combination $\eta$, where
\begin{align}
  c_t \defeq \Lopt + \frac{\Lopt_{t-1}}{1 + (t-1)\lambda}.
\end{align}

Since we have equality (by construction) when $\eta$ is a basis vector, it suffices to show that the only local minima of $S(\eta)$ on the simplex are at its vertices.

If $\eta$ is not a vertex of the simplex, then there is a vector $u$ such that the line segment $[\eta - u, \eta + u]$ is contained in the simplex. If $\eta$ is also a local minimum of $S$, then for every such $u$ the univariate function $\hat S(x) = S(\eta + xu)$ has a local minimum on $[-1,1]$ at $x = 0$, so we must have $\hat S'(0) = 0$ and $\hat S''(0) > 0$. Set $\hat Q(x) = Q_{\eta + xu}$ to get
	$$\hat S(x)= \sum_t c_t (\eta_t + u_t x)(\hat Q(x) - Q_t).$$
	To compute the derivatives of $S(x)$, we need the derivatives of $\hat Q(x)$. Since
	$$\ln \hat Q(x)=\sum_t (\eta_t + u_t x) \ln Q_t,$$
	we have
	$$\frac{d}{dx}[\ln \hat Q(x)] = \frac{\hat Q'(x)} {\hat Q(x)} = \sum_t u_t \ln Q_t = \ln Q_u.$$
	Thus
	$$\hat Q'(x)=\hat Q(x) \ln Q_u$$
	and
	$$\hat Q''(x)=\hat Q(x) (\ln Q_u)^2.$$
	Using the above relations, we get
	$$\hat S'(x) = \sum_t c_t \left[u_t (\hat Q(x) - Q_t) + (\eta_t + u_t x) \hat Q'(x)\right]$$
	and
	$$\hat S''(x)=\hat Q(x) \ln Q_u \sum_t c_t[u_t(2 + x\ln Q_u) + \eta_t \ln Q_u]$$
	
	So at a non-vertex local minimum we would have 
	\begin{align*}
		0 = \hat S'(0)= \sum_t c_t \left[u_t(Q_\eta - Q_t) + \eta_t Q_\eta \ln Q_u\right]
	\end{align*}
	and
	\begin{align*}
		0 \leq \hat S''(0)
		&= \sum_t c_t \left[2 u_t Q_\eta \ln Q_u + \eta_t Q_\eta \ln^2 Q_u\right]\\
		&= \ln Q_u \sum_t c_t \left[2 u_t Q_\eta + u_t (Q_t - Q_\eta)\right]\\
		&= \ln Q_u \sum_t c_t u_t (Q_t + Q_\eta)\,,
	\end{align*}
which implies
\begin{align}\label{eqn:localmin}
  \sum_t c_t u_t (Q_t + Q_\eta) \geq 0
\end{align}
for \emph{every} $u\in \R^r$ with $\sum_t u_t = 0$, $Q_u \geq 1$, and $\supp(u) \subseteq \supp(\eta)$. 

In particular, if $i = \max(\supp(\eta))$ and $j = \min(\supp(\eta))$ it suffices to show that the sum in \eqref{eqn:localmin} is negative for $u = e_i - e_j$ (where $e_k$ is the $k$th basis vector), i.e., that
\[c_i(Q_i + Q_\eta) < c_j (Q_j + Q_\eta).\]
Since $Q_\eta$ can be anything between $Q_j$ and $Q_i$ and the $c_k$s are (as we will show) decreasing in $k$, this is the same as showing that
\[c_i (Q_i + Q_j) < 2 c_j Q_j\,,\]
whenever $i > j$.

We will show that this is true when $r=3$. (When $r\geq 4$ it does not seem to be the case that $\hat S''(0) - \ln Q_u \hat S'(0) > 0$ for some $u = e_i - e_j$).

Specializing to the case $r=3$, 
\begin{align}
  Z c_1 &= (1+\lambda)^{d-1}(1+2\lambda)^{d-1} + (1+2\lambda)^{d-1}-(1+\lambda)^{d-1}\\
  Z c_2 &= \frac{3(1+2\lambda)^{d-1} - 1}{2(1+\lambda)}\\
  Z c_3 &= \frac{2(1+\lambda)^{d-1} - 1}{1+2\lambda}\,.
\end{align}
We must show that $c_1 > c_2 > c_3 > 0$ and that
\[c_i (Q_i + Q_j) < 2 c_j Q_j\,,\]
whenever $i>j$.
\begin{claim}
  $c_1 > c_2 > c_3$.
\end{claim}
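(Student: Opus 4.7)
The plan is to substitute $a = (1+\lambda)^{d-1}$, $b = (1+2\lambda)^{d-1}$ and convert each of $c_1 > c_2$ and $c_2 > c_3$ into a polynomial inequality in $\lambda$ and $d$. Since the common denominator $Z$ is positive in the relevant regime, it suffices to prove the corresponding inequalities $Zc_1 > Zc_2 > Zc_3$. At $\lambda = 0$ one checks $Zc_1 = Zc_2 = Zc_3 = 1$, so the claim is strict only for $\lambda > 0$, which is the case of interest.

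For $c_2 > c_3$: cross-multiplying and simplifying reduces the claim to
\[G(\lambda) := 3(1+2\lambda)^d - 4(1+\lambda)^d + 1 > 0.\]
Since $G(0) = 0$ and
\[G'(\lambda) = 2d\bigl[\,3(1+2\lambda)^{d-1} - 2(1+\lambda)^{d-1}\,\bigr] \geq 2d(1+\lambda)^{d-1} > 0\]
(using $(1+2\lambda)^{d-1} \geq (1+\lambda)^{d-1}$), $G$ is strictly increasing on $[0,\infty)$, and the inequality follows.

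For $c_1 > c_2$: cross-multiplying and substituting yields the equivalent claim
\[(1+2\lambda)^{d-1}\bigl[\,2(1+\lambda)^d + 2(1+\lambda) - 3\,\bigr] > 2(1+\lambda)^d - 1.\]
Set $H := 2(1+\lambda)^d - 1$, and note the algebraic identity
\[2(1+\lambda)^d + 2(1+\lambda) - 3 = H + 2\lambda.\]
The inequality rearranges to
\[H\bigl[(1+2\lambda)^{d-1} - 1\bigr] + 2\lambda (1+2\lambda)^{d-1} > 0,\]
which is manifest since $H \geq 1$, $(1+2\lambda)^{d-1} \geq 1$, and $\lambda > 0$.

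The main obstacle is the $c_1 > c_2$ step: the inequality is not visibly monotone in $\lambda$, and a direct derivative attack produces a cumbersome expression whose sign is not obvious. The decisive trick is spotting the identity $2(1+\lambda)^d + 2(1+\lambda) - 3 = (2(1+\lambda)^d - 1) + 2\lambda$, which lets one split the difference into two manifestly nonnegative pieces. Everything else is bookkeeping and Bernoulli-type comparisons.
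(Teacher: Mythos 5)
Your proof is correct and follows the same high-level strategy as the paper's: clear the positive common denominator $Z$ and reduce each inequality to an elementary polynomial fact in $\lambda$. For $c_2 > c_3$ you even arrive at the identical inequality $3(1+2\lambda)^d - 4(1+\lambda)^d + 1 > 0$; the only difference is how it is verified --- the paper compares coefficients termwise (using $3\cdot 2^k \geq 4$ for $k\geq 1$), while you observe $G(0)=0$ and $G'(\lambda) = 2d\bigl[3(1+2\lambda)^{d-1} - 2(1+\lambda)^{d-1}\bigr] > 0$. For $c_1 > c_2$ the paper bounds $(1+\lambda)Zc_1 > 2(1+2\lambda)^{d-1}-1 > (1+\lambda)Zc_2$ directly, whereas you multiply by $2(1+\lambda)$, factor out $(1+2\lambda)^{d-1}$, and use the identity $2(1+\lambda)^d + 2(1+\lambda) - 3 = H + 2\lambda$ with $H = 2(1+\lambda)^d - 1$ to split the difference into the two manifestly positive pieces $H\bigl[(1+2\lambda)^{d-1}-1\bigr]$ and $2\lambda(1+2\lambda)^{d-1}$. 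These are stylistic rather than structural differences --- both sets of final checks are routine --- but your $c_1 > c_2$ argument is arguably a little cleaner than the paper's two-step chain, and your explicit note that equality holds at $\lambda = 0$ (so the strict inequality requires $\lambda > 0$) is a precision the paper leaves implicit.
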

\begin{proof}
  To see the first inequality it suffices to see that
  \begin{align}
    (1+\lambda)^d((1+2\lambda)^{d-1}-1) + (1+\lambda)(1+2\lambda)^{d-1} 
    &> 2(1+2\lambda)^{d-1}-1\\
    &> \frac32(1+2\lambda)^{d-1}-\frac12.
  \end{align}
  For the second it suffices to show that
  \begin{align}
    3(1+2\lambda)^d - (1+2\lambda)
    &> 4(1+\lambda)^d - 2(1+\lambda).
  \end{align}
  Indeed, this is true termwise as polynomials in $\lambda$:
  \begin{align}
    2 + (6d-2)\lambda + \sum_{k=2}^d 3\binom{d}{k} 2^k \lambda^k
    &> 2 + (4d-2)\lambda + \sum_{k=2}^d 4\binom{d}{k} \lambda^k.
  \end{align}
  since $3(2^k) \geq 4$ when $k \geq 2$.
  
  Finally, it is clear from inspection that $c_3 > 0$.
\end{proof}

\begin{claim}
  $c_i(Q_i + Q_j) < 2 c_j Q_j$ whenever $1\leq j < i \leq 3$.
\end{claim}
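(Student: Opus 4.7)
The plan is to verify the three subcases $(i,j) \in \{(2,1),(3,1),(3,2)\}$ one at a time. Substituting the explicit formulas for $Zc_1$, $Zc_2$, $Zc_3$ together with $Q_1 = 1$, $Q_2 = (1+\lambda)^d$, $Q_3 = (1+2\lambda)^d$, and using the shorthands $A \defeq (1+\lambda)^{d-1}$ and $B \defeq (1+2\lambda)^{d-1}$, each desired inequality can be cleared of denominators (since $Z$, $1+\lambda$, and $1+2\lambda$ are all positive) and reduced to a polynomial inequality in $\lambda$ parameterized by $d$.

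For $(i,j) = (3,1)$, direct expansion shows that $c_3(Q_3 + Q_1) < 2c_1$ is equivalent to
\begin{align*}
  3(1+2\lambda)^d + 1 > 4(1+\lambda)^d,
\end{align*}
which is exactly the termwise binomial inequality already established in the proof of $c_2 > c_3$ above; this subcase therefore requires no new work.

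For $(i,j) = (2,1)$, the inequality reduces to
\begin{align*}
  AB(1+\lambda) + B(1+4\lambda) + 1 > 3A(1+\lambda),
\end{align*}
and for $(i,j) = (3,2)$ it reduces to
\begin{align*}
  (A+1)B(1+2\lambda) > 2A^2(1+\lambda) + A\lambda.
\end{align*}
For each of these I would expand $A$, $B$, and their products via the binomial theorem and check that the difference of the two sides has nonnegative coefficients as a polynomial in $\lambda$ (with a strictly positive coefficient in degree $1$, so that strict inequality holds for $\lambda > 0$). As supporting evidence: at $d=1$ the differences equal $2\lambda$ and $\lambda$ respectively, and at $d=2$ they equal $4\lambda + 10\lambda^2 + 2\lambda^3$ and $2\lambda + 5\lambda^2 + 2\lambda^3$, consistent with termwise nonnegativity.

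The main obstacle lies in cases $(2,1)$ and $(3,2)$: the two sides involve three interacting binomial arrays (from $(1+\lambda)^{d-1}$, $(1+2\lambda)^{d-1}$, and their products), and a brute-force termwise comparison is bookkeeping-heavy. A cleaner route is induction on $d$ with $\lambda > 0$ fixed: the base $d=1$ is trivial, and the inductive step exploits that the $A$-side grows by the factor $1+\lambda$ while the $B$-side grows by the larger factor $1+2\lambda$, so the favorable (larger) side dominates the growth. If this still proves unwieldy, a backup is to factor the difference as a product of two simpler positive expressions, in the spirit of the termwise comparison used in the $c_2 > c_3$ claim.
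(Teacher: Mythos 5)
Your reductions of the three subcases to polynomial inequalities in $\lambda$ are correct and match what the paper obtains after clearing denominators; writing $A=(1+\lambda)^{d-1}$ and $B=(1+2\lambda)^{d-1}$, your inequalities for $(2,1)$ and $(3,2)$ are algebraically identical to the paper's displays. Your observation that the $(3,1)$ case reduces to exactly $3(1+2\lambda)^d - 4(1+\lambda)^d + 1 > 0$, which was already established in the proof of $c_2 > c_3$, is a nice efficiency the paper does not exploit (it re-proves the inequality termwise).

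However, the proposal stops short of a proof for the two remaining cases. Verifying the inequalities at $d=1$ and $d=2$ is supporting evidence, not an argument, and the sentence ``the inductive step exploits that the $A$-side grows by the factor $1+\lambda$ while the $B$-side grows by the larger factor $1+2\lambda$'' is not yet a computation; in the $(3,2)$ case, for instance, the right-hand side contains an $A^2$ factor growing by $(1+\lambda)^2$, so ``larger growth on the $B$-side'' needs to be quantified carefully (one finds that $L(d+1)-(1+\lambda)^2 L(d) - \bigl(R(d+1)-(1+\lambda)^2 R(d)\bigr) = \lambda(1+2\lambda)^d\bigl[(1+\lambda)^d-\lambda\bigr]+\lambda^2(1+\lambda)^d > 0$, which does close the induction, but this is the step that must actually be done). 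The paper's treatment of $(2,1)$ is a direct termwise binomial-coefficient comparison, with the key arithmetic fact being $2^k \ge 3$ for $k \ge 2$, while for $(3,2)$ it avoids the ``three interacting binomial arrays'' you worry about by instead isolating the elementary scalar bound $2(1+\lambda)^d - (1+2\lambda)^d < 1$ (the $k=0$ binomial term is $1$ and every higher term is nonpositive), which immediately dominates the awkward parenthesized factor. So: correct setup, genuinely useful shortcut for $(3,1)$, but the crux of the two nontrivial subcases is left as a plausible sketch rather than a completed argument, whereas the paper finishes the job.
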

\begin{proof}
  For $i=2$ and $j=1$ we must show that
  \begin{align}
    (3(1+2\lambda)^{d-1}-1)((1+\lambda)^d + 1)
    &< 4 (1+\lambda)^d((1+2\lambda)^{d-1}-1) + 4 (1+\lambda)(1+2\lambda)^{d-1}).
  \end{align}
  This simplifies to showing
  \begin{align}
    (1+\lambda)^d(1+2\lambda)^{d-1} + (1+2\lambda)^d + 2\lambda(1+2\lambda)^{d-1} - 3(1+\lambda)^d + 1 > 0.
  \end{align}
  As before, this is true termwise as a polynomial in $\lambda$:
  \begin{align}
    &(1+\lambda)^d(1+2\lambda)^{d-1} + (1+2\lambda)^d + 2\lambda(1+2\lambda)^{d-1} - 3(1+\lambda)^d + 1\\
    &= \lambda(d+2(d-1)+2d-3d) + \sum_{k\geq 2} \lambda^k \parens[\bigg]{\textstyle\sum_\ell \binom{d}{k-\ell}\binom{d-1}{\ell}2^\ell + \binom{d}{k}2^k + \binom{d-1}{k-1} 2^k - 3 \binom{d}{k}}\\
    &>0\,,
  \end{align}
  since $2^k > 3$ when $k\geq 2$.
  
  For $i=3$ and $j=1$, we must show that
  \begin{align}
    (2(1+\lambda)^{d-1} - 1)((1+2\lambda)^d + 1)
    &< 2((1+\lambda)^{d-1}+1)(1+2\lambda)^d - 2(1+\lambda)^{d-1}(1+2\lambda).
  \end{align}
  This simplifies to 
  \begin{align}
    3(1+2\lambda)^d - 4(1+\lambda)^d + 1 > 0,
  \end{align}
  which is once again true termwise:
  \begin{align}
    3(1+2\lambda)^d - 4(1+\lambda)^d + 1
    &= \sum_{k\geq 1} \lambda^k \parens[\Big]{3\binom{d}{k}2^k - 4\binom{d}{k}} > 0
  \end{align}
  since $3(2^k) > 4$ when $k\geq 1$.
  
  For $i=3$ and $j=2$, we must show that
  \begin{align}
    (2(1+\lambda)^d - (1+\lambda))((1+2\lambda)^d + (1+\lambda)^d) 
    &< (3(1+2\lambda)^d - (1+2\lambda))(1+\lambda)^d.
  \end{align}
  This simplifies to
  \begin{align}
    (1+2\lambda)^d - (1+\lambda)^{d-1}\parens[\Big]{2(1+\lambda)^d - (1+2\lambda)^d + \lambda} > 0,
  \end{align}
  so it suffices to see that
  \begin{align}
    2(1+\lambda)^d - (1+2\lambda)^d = \sum_{k\geq 0} (2 - 2^k) \lambda^k < 1.
  \end{align}
\end{proof}

We conclude by noting that from~\eqref{eqn:opt} the optimum of the LP can be written as~\eqref{eqAlpaFormula}, and thus we have proved Proposition~\ref{thmoccupancy3}.

\section{Conclusions}
In this paper we conjectured some general upper bounds on the number of independent sets in uniform, regular, linear hypergraphs (Conjectures~\ref{conj:strong} and~\ref{conj:linear}) and using the occupancy method proved new bounds in some cases (Theorems~\ref{thm:3unif} and~\ref{thm:weak}).  One immediate direction for future work would be to remove the cross-edge free assumption in these results; as far as we know it is unnecessary, but configurations with cross-edges significantly complicate the analysis of the linear programming relaxations used in the proofs (but in principle this can be done, see~\cite{davies2015}).  Another direction to pursue would be to find a simpler analysis of the linear programming relaxations (or a different set of constraints) that might generalize Theorem~\ref{thm:3unif} to all $r \ge 3$.

\subsection{Non-linear hypergraphs}

While we have focused on linear hypergraphs here, there  are many interesting open questions about independent sets in general (non-linear) hypergraphs.  

\begin{question}
\label{QNonLinear}
For $r, d \ge2$, $2 \le k \le r$, which $r$-uniform, $d$-regular hypergraph $H$ maximizes the quantity
\[ \frac{1}{|V(H)|} \log i_k (H) \, ?\]
\end{question}
By analogy with the graph case ($r=2$) a first guess would be that perhaps the complete $r$-partite hypergraph is the maximizer.  However, along with several other new results in the non-linear case, Balogh, Bollob\'as, and Narayanan~\cite{baloghHyper} have recently shown that this is not true in general by finding a better construction.

\subsection{Lower bounds}
Finally, while we have focused exclusively on upper bounds on the number of independent sets in hypergraphs in this paper, there are many interesting questions about lower bounds on both the maximum size of independent sets and the number of independent sets in various classes of hypergraphs.

Ajtai, Koml{\'o}s, Pintz, Spencer, and Szemer{\'e}di~\cite{ajtai1982extremal} proved a lower bound on the maximum size of a weak independent set in a uniform hypergraph of girth at least $5$ and a given average degree.  Duke, Lefmann, and R{\"o}dl proved a similar lower bound under the weaker assumption that the hypergraph is linear~\cite{duke1995uncrowded} (instead of girth $\ge 5$), and Cooper, Dutta, and Mubayi~\cite{cooper2014counting} proved a lower bound  on the number of weak independent sets in a uniform, linear hypergraph of a given average degree.

The occupancy method has been used to sharpen the lower bound on the number of independent sets in a triangle-free graph~\cite{davies2016averagen}, and one could ask if improvements via the same technique are possible in the case of hypergraphs.  

\subsection{Non-regular hypergraphs}
Finally let us mention that one can ask for the maximum number of independent sets in a hypergraph with  a given number of vertices and edges (not necessarily regular), and in this case Cutler and Radcliffe have determined that the maximizer is the `lexicographic hypergraph'~\cite{Cutler2012}.  The structure of the maximizing hypergraph and the techniques employed are significantly different than the regular case studied here.

\section*{Acknowledgements}
We thank the anonymous referees for very helpful comments on the presentation of the paper.   We thank Bhargav Narayanan for bringing~\cite{baloghHyper} to our attention. 

\bibliographystyle{abbrv}
\bibliography{hypergraphmatchings}

\end{document}